\newtheorem{assumption}{Assumption}
\newtheorem{remark}{Remark}
\newtheorem{lemma}{Lemma}
\newtheorem{example}{Example}
\newtheorem{proposition}{Proposition}
\newtheorem{theorem}{Theorem}
\newtheorem{definition}{Definition}
\DeclareMathOperator*{\argmax}{arg\,max}
\title{A Unified Switching System Perspective and O.D.E. Analysis of Q-Learning Algorithms}
\author{Donghwan Lee
\thanks{D. Lee is with Coordinated Science Laboratory (CSL),
University of Illinois at Urbana-Champaign, IL 61801, USA {\tt\small
donghwan@illinois.edu}.}$\,\,$ and Niao He
\thanks{N. He is with the Department of Industrial and Enterprise Systems Engineering,
University of Illinois at Urbana-Champaign, IL 61801, USA {\tt\small
niaohe@illinois.edu}.}}
\begin{document}

\maketitle

\begin{abstract}
 In this paper, we introduce a unified framework for analyzing a large family of Q-learning algorithms, based on switching system perspectives and ODE-based stochastic approximation. We show that the nonlinear ODE models associated with these Q-learning algorithms can be formulated as switched linear systems, and analyze their asymptotic stability by leveraging existing switching system theories. Our approach provides the first O.D.E. analysis of the asymptotic convergence of various Q-learning algorithms, including asynchronous Q-learning and averaging Q-learning. We also extend the approach to analyze Q-learning with linear function approximation and derive a new sufficient condition for its convergence.

\end{abstract}

%

\section{Introduction}
 Reinforcement learning (RL) addresses the optimal control problem for unknown systems through experiences~\citep{sutton1998reinforcement}.
Q-learning, originally introduced by Watkin~\citep{watkins1992q}, is one of the most popular and fundamental reinforcement learning  algorithms for unknown systems described by Markov decision processes. The convergence of Q-learning has been extensively studied in the literature and proven via several different approaches, including the original proof~\citep{watkins1992q}, the stochastic approximation and contraction mapping-based approach~\citep{jaakkola1994convergence, tsitsiklis1994asynchronous,bertsekas1996neuro,even2003learning,szepesvari1998asymptotic,azar2011speedy,zou2019finite}, and the ODE (ordinary differential equation) approach~\citep{borkar2000ode}.

The ODE approach analyzes the convergence of general stochastic recursions by examining stability of the associated ODE model~\citep{bhatnagar2012stochastic,kushner2003stochastic,borkar2000ode} and has been used as a convenient analysis tool to prove convergence of many RL algorithms, especially the temporal difference (TD) learning algorithm~\citep{sutton1988learning} and its variants~\citep{maei2010toward,sutton2009fast,lee2019target,gupta2019adaptive}.
However, its application to Q-learning has been limited due to the presence of the max operator, which makes the associated ODE model a complex nonlinear system. In contrast, the associated ODE of TD learning for policy evaluation is linear, whose asymptotic stability is much easier to analyze in general. While~\cite{borkar2000ode} gave the convergence proof of Q-learning based on a nonlinear ODE model, to the authors' knowledge, substantial analysis is required to prove the stability of the corresponding nonlinear ODE~\citep{borkar1997analog} by using the max-norm contraction of the Bellman operator. It should also be noted that the result in~\cite{borkar2000ode} only applies to synchronous Q-learning, where every state-action pair should be visited one time at each iteration, instead of the commonly used asynchronous Q-learning. Last but not least, the stability analysis does not immediately extend to other Q-learning variants, such as double Q-learning~\citep{hasselt2010double}, averaging Q-learning~\citep{lee2019target}, and Q-learning with linear function approximation.

In this paper, we provide a simple and unified framework to analyze Q-learning and its variants through switched linear system (SLS) models~\citep{liberzon2003switching} of the associated ODE. SLSs are an important class of nonlinear hybrid systems, where the system dynamics matrix varies within a finite set of subsystem matrices (or modes) according to a switching signal. The study of SLSs has attracted much attention in the past years and their stability behaviors have been well established in the literature; see~\cite{lin2009stability} and~\cite{liberzon2003switching} for comprehensive surveys. Our main contributions are summarized as follows:
\begin{enumerate}
\item  For a number of Q-learning algorithms such as the asynchronous Q-learning, we show that the nonlinear ODE models associated with these algorithms can be characterized as switched linear systems, or more precisely, switched affine systems with a  state-feedback switching policy.

\item We construct both upper and lower comparison systems of the corresponding switched affine systems, and prove their asymptotic stability based on existing switching system theory and comparison principles. As a result of the Borkar and Meyn theorem~\citep{borkar2000ode}, we obtain the asymptotic convergence of these Q-learning algorithms.

\item We also extend the approach to analyze the averaging Q-learning~\citep{lee2019target}. To our best knowledge, the convergence of averaging is analyzed for the first time in the literature.

\item Lastly, we also examine Q-learning with linear function approximation and derive a new sufficient condition to ensure its convergence based on the switching system theory. We show that, under specific assumptions, this new condition is weaker than the condition provided in~\cite{melo2008analysis}.

\end{enumerate}
It is worth mentioning that a number of recent works establish the convergence analysis of reinforcement learning algorithms based on their connections to control theory. For example,~\cite{srikant2019finite} provides the finite sample bound of TD learning based on Lyapunov stability theory for linear ODE. \cite{chen2019performance,yang2019theoretical} provided the analysis for Q-learning with linear function approximation through analyzing the associated nonlinear ODE,  assuming the stability of the system, e.g., under the condition provided in~\cite{melo2008analysis}. Note that the new sufficient condition that guarantees its convergence in this paper is weaker than that used in~\cite{chen2019performance}. Another closely related work is~\cite{hu2019characterizing}, which explores the connection between temporal difference (TD) learning and dynamic systems, particularly the Markov jump linear systems (MJLS). Note that MJLS cannot be used to characterize the nonlinear dynamics of Q-learning. Instead, we resort to switched linear systems with state-feedback switching policies. To the authors' best knowledge, this is the first work that provides the connection between Q-learning and switching system theory. Our new ODE approach based on switched linear system can be used as a viable alternative to prove the stability of the associated ODE of various reinforcement learning algorithms as well as their asymptotic (and potentially non-asymptotic) convergence.


\paragraph{Notation.} The following notation is adopted: ${\mathbb R}^n $ is the
$n$-dimensional Euclidean space; ${\mathbb R}^{n \times m}$
denotes the set of all $n \times m$ real matrices; ${\mathbb R}_+^n$
is the sets of vectors with nonnegative real elements; $A^T$ denotes the transpose of matrix
$A$; $I_n$ is the $n \times n$ identity matrix; $I$ stands for
the identity matrix with appropriate dimension; $|{\cal S}|$ means the cardinality of the set for any finite set ${\cal
S}$; ${\mathbb E}[\cdot]$ is the expectation operator;
${\mathbb P}[\cdot]$ means the probability of an event; $[M]_{ij}$ indicates the
element in $i$-th row and $j$-th column for any matrix $M$; $e_j,j\in \{ 1,2,\ldots,n\}$, is
the $j$-th basis vector (all components are $0$ except for the
$j$-th component which is $1$) of appropriate dimensions. $\otimes$ stands for the Kronecker product between two matrices.

\section{Preliminaries}

\subsection{Markov decision problem}
We consider the infinite-horizon (discounted) Markov decision problem (MDP), where the agent sequentially takes actions to maximize cumulative discounted rewards. In a Markov decision process with the state-space ${\cal
S}:=\{ 1,2,\ldots ,|{\cal S}|\}$ and action-space ${\cal A}:= \{1,2,\ldots,|{\cal A}|\}$, the decision maker selects an action $a \in {\cal A}$ with the current state $s$, then the state
transits to $s'$ with probability $P_a(s,s')$, and the transition incurs a random
reward $r_a(s,s')$, where $P_a\in {\mathbb
R}^{|{\cal S}| \times |{\cal S}|},a \in {\cal A}$, $P_a
(s,s')$ is the state transition probability from the current state
$s\in {\cal S}$ to the next state $s' \in {\cal S}$ under action
$a \in {\cal A}$, and $r_a(s,s')$ is the reward random variable conditioned on $a
\in {\cal A},s,s'\in {\cal S}$ with its expectation ${\mathbb E}[r_a(s,s')|s,a,s']= R_a(s,s')$. A deterministic policy, $\pi :{\cal S} \to {\cal A}$, maps a state $s \in {\cal S}$ to an action $\pi(s)\in {\cal A}$. The Markov decision problem (MDP) is to find a deterministic optimal policy, $\pi^*$, such that the cumulative discounted rewards over infinite time horizons is
maximized, i.e.,
\begin{align*}
\pi^*:= \argmax_{\pi\in \Theta} {\mathbb E}\left[ \left.\sum_{k=0}^\infty {\alpha^k r_{a_k}(s_k,s_{k+1})}\right|\pi\right],
\end{align*}
where $\gamma \in [0,1)$ is the
discount factor, $\Theta$ is the set of all admissible deterministic policies, $(s_0,a_0,s_1,a_1,\ldots)$ is a state-action trajectory generated by the Markov chain under policy $\pi$, and ${\mathbb E}[\cdot|\cdot,\pi]$ is an expectation conditioned on the policy $\pi$. The Q-function under policy $\pi$ is defined as
\begin{align*}
&Q^{\pi}(s,a)={\mathbb E}\left[ \left. \sum_{k=0}^\infty {\gamma^k r_{a_k}(s_k,s_{k+1})} \right|s_0=s,a_0=a,\pi \right],\quad s\in {\cal S},a\in {\cal A},
\end{align*}
and the corresponding optimal Q-function is defined as $Q^*(s,a)=Q^{\pi^*}(s,a)$ for all $s\in {\cal S},a\in {\cal A}$. Once $Q^*$ is known, then an optimal policy can be retrieved by $\pi^*(s)=\argmax_{a\in {\cal A}}Q^*(s,a)$.

\subsection{Basics of nonlinear system theory}
Consider the nonlinear system
\begin{align}
\frac{d}{dt}x_t=f(x_t),\quad x_0=z,\quad t\in {\mathbb R}_+,\label{eq:nonlinear-system}
\end{align}
where $x_t\in {\mathbb R}^n$ is the state and $f:{\mathbb R}^n \to {\mathbb R}^n$ is a nonlinear mapping. For simplicity, we assume that the solution to~\eqref{eq:nonlinear-system} exists and is unique. In fact, this holds true so long as the mapping $f$ is globally Lipschitz continuous.
\begin{lemma}[{\cite[Theorem~3.2]{khalil2002nonlinear}}]\label{lemma:existence}
Consider the nonlinear system~\eqref{eq:nonlinear-system} and assume that $f$ is globally Lipschitz continuous, i.e.,
\begin{align}
&\|f(x)-f(y)\|\le L \|x-y\|,\quad \forall x,y \in {\mathbb R}^n,
\end{align}
for some $L>0$ and norm $\|\cdot\|$, then it has a unique solution $x(t)$ for all $t\geq 0$ and $x(0) \in {\mathbb R}^n$.
\end{lemma}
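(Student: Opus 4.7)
The plan is to reduce existence-and-uniqueness to a fixed-point problem for the integral reformulation of~\eqref{eq:nonlinear-system}, namely $x(t) = z + \int_0^t f(x(s))\, ds$, and then invoke the Banach contraction principle. This is the classical global Picard--Lindel\"of argument; since the lemma is stated as a direct citation of Khalil's Theorem~3.2, in practice I would simply invoke that reference, but the self-contained sketch below is what I would reproduce otherwise.

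First I would fix an arbitrary horizon $T > 0$ and work in the Banach space $C([0,T]; \mathbb{R}^n)$ endowed with the weighted (Bielecki) norm $\|x\|_\lambda := \sup_{t \in [0,T]} e^{-\lambda t} \|x(t)\|$ for some $\lambda > L$. Defining the Picard operator $(\Phi x)(t) := z + \int_0^t f(x(s))\, ds$, a short computation using the Lipschitz hypothesis shows $\|(\Phi x)(t) - (\Phi y)(t)\| \le L \int_0^t \|x(s)-y(s)\|\, ds$, which after multiplying by $e^{-\lambda t}$ and taking the supremum yields $\|\Phi x - \Phi y\|_\lambda \le (L/\lambda)\|x-y\|_\lambda$. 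Choosing $\lambda > L$ makes $\Phi$ a strict contraction, so Banach's fixed-point theorem produces a unique fixed point in $C([0,T]; \mathbb{R}^n)$, which is continuously differentiable and solves~\eqref{eq:nonlinear-system} on $[0,T]$. Since $T$ is arbitrary, the fixed points for increasing horizons agree on their common domains (by uniqueness) and glue to a unique solution on $[0,\infty)$.

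The main subtlety, and the reason the global (rather than merely local) Lipschitz hypothesis is needed, is to rule out finite-time blow-up: without a growth bound on $f$, a candidate solution could escape to infinity before reaching $T$ and no function space on $[0,T]$ would host it. Global Lipschitz yields the linear growth bound $\|f(x)\| \le \|f(0)\| + L\|x\|$, and a Gr\"onwall argument then produces the a priori estimate $\|x(t)\| \le (\|z\| + t\|f(0)\|)e^{Lt}$, ensuring trajectories remain bounded on every compact interval and that the local fixed-point construction extends to the entire half-line. The same Gr\"onwall argument applied to the difference of any two global solutions secures uniqueness on $[0,\infty)$. I do not anticipate genuine obstacles here, as this is a textbook statement; the only care needed is in the choice of weighted norm so that the contraction constant is independent of $T$.
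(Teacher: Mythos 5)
The paper offers no proof of this lemma---it is imported verbatim as a citation of Khalil's Theorem~3.2---and your contraction-mapping/Picard--Lindel\"of argument is precisely the standard proof of that cited result, so it is correct and takes essentially the same route. One minor remark: with the Bielecki weighted norm the Picard operator is already a contraction on all of $C([0,T];{\mathbb R}^n)$ for every $T$, so the Gr\"onwall a priori bound you add to rule out finite-time blow-up is harmless but redundant; it would only be needed if you ran the contraction locally with the unweighted sup norm and then extended.
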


An important concept in dealing with the nonlinear system is the equilibrium point. A point $x=x^e$ in the state space is said to be an equilibrium point of~\eqref{eq:nonlinear-system} if it has the property that whenever the state of the system starts at $x^e$, it will remain at $x^e$~\citep{khalil2002nonlinear}. For~\eqref{eq:nonlinear-system}, the equilibrium points are the real roots of the equation $f(x)=0$. The equilibrium point $x^e$ is said to be globally asymptotically stable if for any initial state $x_0 \in {\mathbb R}^n$, $x_t \to x^e$ as $t \to \infty$. Now, we provide a vector comparison principle~\citep{walter1998ordinary,hirsch2006monotone,platzer2018vector} for multi-dimensional O.D.E., which will play a central role in the analysis below. We first introduce the quasi-monotone increasing function, which is a necessary prerequisite for the comparison principle.
\begin{definition}[Quasi-monotone function]\label{def:quasi-monotone}
A vector-valued function $f:{\mathbb R}^n \to {\mathbb R}^n$ with $f:=\begin{bmatrix} f_1 & f_2 & \cdots & f_n\\ \end{bmatrix}^T$ is said to be quasi-monotone increasing if $f_i (x) \le f_i (y)$ holds for all $i \in \{1,2,\ldots,n \}$ and $x,y \in {\mathbb R}^n$ such that $x_i=y_i$ and $x_j\le y_j$ for all $j\neq i$.
\end{definition}

An example of a quasi-monotone increasing function $f$ is $f(x)=Ax$ where $A$ is a Metzler matrix, which implies the off-diagonal elements of $A$ are nonnegative. Now, we introduce a vector comparison principle. For completeness, we provide a simple proof tailored to our interests.
\begin{lemma}[Vector comparison principle {\cite[Theorem~3.2]{hirsch2006monotone}}]\label{lemma:comparision-principle}
 Suppose that $\overline{f}$ and $\underline{f}$ are globally Lipschitz continuous. Let $v_t$ be a solution of the system
\begin{align*}
\frac{d}{dt}x_t=\overline{f}(x_t),\quad x_0\in {\mathbb R}^n,\forall \quad t  \geq 0,
\end{align*}
assume that $\overline{f}$ is quasi-monotone increasing, and let $v_t$ be a solution of the system
\begin{align}
\frac{d}{dt} v_t = \underline{f}(v_t), \quad v_0 < x_0, \quad \forall t \geq 0,\label{eq:lower-system}
\end{align}
where $\underline{f}(v) \le \overline{f}(v)$ holds for any $v \in {\mathbb R}^n$. Then, $v_t\leq x_t$  for all $t \geq 0$.
\end{lemma}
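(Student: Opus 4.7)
The plan is to establish $v_t \le x_t$ by a first-touch argument on the componentwise difference $w_t := x_t - v_t$, combined with a vanishing perturbation of the lower system to evade a degenerate equality case. The hypothesis $v_0 < x_0$ gives $w_0 > 0$ componentwise, and global Lipschitz continuity of $\overline{f}$ and $\underline{f}$ ensures, via \Cref{lemma:existence}, that $x_t$ and $v_t$ exist, are unique, and are $C^1$ on $[0,\infty)$, so $w_t$ is $C^1$ as well.

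Suppose, for contradiction, that there exist a coordinate $i$ and a time $\tau > 0$ with $w_{\tau, i} < 0$. By continuity of $w$, one can define
\begin{align*}
t^* := \inf \bigl\{ t \ge 0 : w_{t, j} \le 0 \text{ for some } j \bigr\} > 0,
\end{align*}
and there exists an index $i$ with $x_{t^*, i} = v_{t^*, i}$, while $x_{t^*, j} \ge v_{t^*, j}$ for all $j$. Since $w_{\cdot, i}$ is nonnegative on $[0, t^*]$ and vanishes at the right endpoint, $\frac{d}{dt} w_{t, i} \big|_{t = t^*} \le 0$. On the other hand, plugging in the ODEs,
\begin{align*}
\frac{d}{dt} w_{t, i} \Big|_{t = t^*} = \overline{f}_i(x_{t^*}) - \underline{f}_i(v_{t^*}).
\end{align*}
The quasi-monotone increasing property of $\overline{f}$, applied with equality in coordinate $i$ and weak inequalities in the others, yields $\overline{f}_i(v_{t^*}) \le \overline{f}_i(x_{t^*})$, and the pointwise bound $\underline{f} \le \overline{f}$ gives $\underline{f}_i(v_{t^*}) \le \overline{f}_i(v_{t^*})$. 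Chaining these produces $\frac{d}{dt} w_{t, i} \big|_{t = t^*} \ge 0$.

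The main obstacle is that the chain of inequalities above is nonstrict, and therefore only consistent with the sign forced by the crossing rather than contradictory to it. To force strictness, I would introduce a perturbed lower system $\frac{d}{dt} v_t^\epsilon = \underline{f}(v_t^\epsilon) - \epsilon \mathbf{1}$ with $v_0^\epsilon = v_0$ for each $\epsilon > 0$; this still has a unique global solution because global Lipschitz continuity is preserved, and now the drift bound $\underline{f}(v) - \epsilon \mathbf{1} < \overline{f}(v)$ is strict in every coordinate. Rerunning the first-touch computation with $v_t^\epsilon$ in place of $v_t$, the analogous chain at the putative first-crossing time $t^*_\epsilon$ gives $\frac{d}{dt}(x_t - v_t^\epsilon)_i \big|_{t = t^*_\epsilon} \ge \epsilon > 0$, which genuinely contradicts the nonpositive-derivative constraint; hence $v_t^\epsilon < x_t$ componentwise for all $t \ge 0$.

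Finally, sending $\epsilon \downarrow 0$, standard continuous dependence of ODE solutions on parameters, a Grönwall consequence of the global Lipschitz continuity of $\underline{f}$, shows that $v_t^\epsilon \to v_t$ uniformly on every compact time interval. Passing to the limit in the componentwise strict bound yields the componentwise weak bound $v_t \le x_t$ for all $t \ge 0$, as claimed.
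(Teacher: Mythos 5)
Your proposal is correct and follows essentially the same route as the paper's proof: a first-crossing contradiction using quasi-monotonicity of $\overline{f}$ and the ordering $\underline{f}\le\overline{f}$, with the lower system perturbed by $-\epsilon\mathbf{1}$ to make the inequality chain strict, and then $\epsilon\downarrow 0$ via continuous dependence of solutions. The only (cosmetic) difference is that you first exhibit why the unperturbed argument stalls before introducing the perturbation, and you read off the sign of the derivative from the left-sided difference quotient of $x-v^\epsilon$ at the touch time rather than from the right-sided quotient of the crossing as the paper does.
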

\begin{proof}
Instead of~\eqref{eq:lower-system}, first consider
\begin{align*}
\frac{d}{dt} v_{\varepsilon}(t) = \underline{f}(v_{\varepsilon}(t))-\varepsilon {\bf 1}_n,\quad v_{\varepsilon}(0) < x(0), \quad \forall t \geq 0
\end{align*}
where $\varepsilon>0$ is a sufficiently small real number and ${\bf 1}_n$ is a vector where all elements are ones, where we use a different notation for the time index for convenience. Suppose that the statement is not true, and let
\begin{align*}
t^* :=\inf \{t\ge 0:\exists i\,\,{\rm such\,\,that}\,\,v_{\varepsilon,i}(t)>x_{i}(t)\}<\infty,
\end{align*}
and let $i$ be such index. By the definition of $t^*$, we have that $v_{\varepsilon,i} (t^* ) = x_{i}(t^* )$ and $v_{\varepsilon ,j} (t^* ) \le x_j (t^* )$ for any $j\neq i$. Then, since $\overline{f}$ is quasi-monotone increasing, we have
\begin{align}
\overline{f}_i(v_\varepsilon (t^*))\le \overline{f}_i(x(t^*)).\label{eq:5}
\end{align}

On the other hand, by the definition of $t^*$, there exists a small $\delta >0$ such that
\begin{align*}
v_{\varepsilon,i} (t^*+\Delta t)>x_i (t^*+\Delta t)
\end{align*}
for all $0<\Delta t<\delta$. Dividing both sides by $\Delta t$ and taking the limit $\Delta t \to 0$, we have
\begin{align}
\dot v_{\varepsilon,i}(t^*)\ge\dot x_i(t^*)=\overline{f}_i(x(t^*)).\label{eq:2}
\end{align}

By the hypothesis, it holds that
\begin{align*}
\frac{d}{dt}v_\varepsilon(t)=\underline{f}(v_\varepsilon(t))-\varepsilon {\bf 1}_n < \underline{f}(v_\varepsilon(t))\le \overline{f}(v_\varepsilon(t))
\end{align*}
holds for all $t\geq 0$. The inequality implies $\dot v_{\varepsilon,i}(t)<\overline{f}_i (v_\varepsilon (t))$, which in combination with~\eqref{eq:2} leads to $\overline{f}_i (v_\varepsilon(t^*))>\overline{f}_i(x(t^*))$. However, it contradicts with~\eqref{eq:5}. Therefore, $v_\varepsilon(t)\le x(t)$ holds for all $t\geq 0$. Since the solution $v_\varepsilon(t)$ continuously depends on $\varepsilon >0$ \citep[Chap.~13]{walter1998ordinary}, taking the limit $\varepsilon \to 0$, we conclude $v_0(t)\le x(t)$ holds for all $t\geq 0$. This completes the proof.

\end{proof}

\subsection{Switching system theory}

Consider the particular nonlinear system, the \emph{switched linear  system},
\begin{align}
&\frac{d}{dt} x_t=A_{\sigma_t} x_t,\quad x_0=z\in {\mathbb
R}^n,\quad t\in {\mathbb R}_+,\label{eq:switched-system}
\end{align}
where $x_t \in {\mathbb R}^n$ is the state,  $\sigma\in {\mathcal M}:=\{1,2,\ldots,M\}$ is called the mode,  $\sigma_t \in
{\mathcal M}$ is called the switching signal, and $\{A_\sigma,\sigma\in {\mathcal M}\}$ are called the subsystem matrices. The switching signal can be either arbitrary or controlled by the user under a certain switching policy. Especially, a state-feedback switching policy is denoted by $\sigma(x_t)$. To prove the global asymptotic stability of the switching system, we will use a fundamental algebraic stability condition of switching systems reported in~\cite{lin2009stability}.
More comprehensive surveys of stability of switching systems can be found in~\cite{lin2009stability} and~\cite{liberzon2003switching}.
\begin{lemma}[{\cite[Theorem~8]{lin2009stability}}]\label{lemma:fundamental-stability-lemma}
The origin of the linear switching system~\eqref{eq:switched-system} is the unique globally asymptotically stable equilibrium point under arbitrary switchings, $\sigma_t$, if and only if there exist a full column rank matrix , $L\in {\mathbb R}^{m\times n}$, $m\geq n$,
and a family of matrices, $\bar A_\sigma \in {\mathbb R}^{m\times n}, \sigma \in {\cal M}$, with
the so-called strictly negative row dominating diagonal condition, i.e., for each $\bar A_\sigma, \sigma \in {\cal M}$,
its elements satisfying
\begin{align*}
&[\bar A_\sigma]_{ii}+\sum_{j \in \{ 1,2, \ldots ,n\} \backslash \{ i\} } {|[\bar A_\sigma]_{ij}}|<0,\quad \forall i \in \{ 1,2, \ldots ,m\},
\end{align*}
where $[\cdot]_{ij}$ is the $(i,j)$-element of a matrix $(\cdot)$, such that the matrix relations
\begin{align*}
&L A_\sigma=\bar A_\sigma L,\quad\forall\sigma\in {\cal M},
\end{align*}
are satisfied.
\end{lemma}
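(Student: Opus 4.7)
The plan is to prove the biconditional by separating sufficiency and necessity, with sufficiency being a direct Lyapunov argument and necessity requiring a converse Lyapunov construction. The key conceptual observation is that the condition $L A_\sigma = \bar A_\sigma L$ is an intertwining relation, which suggests that in the transformed coordinate $y_t := L x_t$ the dynamics read $\dot y_t = \bar A_{\sigma_t} y_t$, and the row dominating diagonal property is tailored to make the weighted infinity norm a common Lyapunov function.

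For sufficiency, I would propose $V(x) := \|L x\|_\infty$ as a common Lyapunov function candidate. Since $L$ has full column rank, $V$ is positive definite and radially unbounded. Introducing $y_t = L x_t$ gives $\dot y_t = \bar A_{\sigma_t} y_t$. I would then bound the upper Dini derivative $D^+ \|y_t\|_\infty$ as follows: at any time $t$, pick an index $i$ attaining the max, so that $|y_{t,i}| = \|y_t\|_\infty$, and compute
\begin{align*}
\mathrm{sign}(y_{t,i})\,\dot y_{t,i} &= [\bar A_{\sigma_t}]_{ii} |y_{t,i}| + \mathrm{sign}(y_{t,i}) \sum_{j\neq i}[\bar A_{\sigma_t}]_{ij} y_{t,j} \\
&\le \bigl([\bar A_{\sigma_t}]_{ii} + \sum_{j\neq i} |[\bar A_{\sigma_t}]_{ij}|\bigr)\|y_t\|_\infty \le -c\,\|y_t\|_\infty,
\end{align*}
where $c>0$ is a uniform bound obtained from the strictly negative row dominating diagonal condition across the finite mode set ${\cal M}$. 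Hence $\|y_t\|_\infty$ decays exponentially under any switching signal, which by full column rank of $L$ implies $x_t \to 0$ globally.

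For necessity, the main obstacle is to construct $L$ and $\bar A_\sigma$ from the assumption of global asymptotic stability under arbitrary switching. My plan is to invoke the converse Lyapunov theorem for switched linear systems, which guarantees existence of a common polyhedral Lyapunov function of the form $V(x) = \|L x\|_\infty$ whose unit sublevel set $\{x : \|Lx\|_\infty \le 1\}$ is a compact, convex, $0$-symmetric polytope that is strictly contractive under every $A_\sigma$. The rows of $L$ are then identified with the (scaled) facet normals of this polytope. The invariance and contraction of this polytope under $\dot x = A_\sigma x$ can be translated, via the intertwining relation $L A_\sigma = \bar A_\sigma L$ (which can always be arranged on the range of $L$ by a suitable choice of $\bar A_\sigma$), precisely into the strictly negative row dominating diagonal property of $\bar A_\sigma$. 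This translation is the technical heart of the proof: verifying that the facet-level contraction conditions rewrite exactly as the row-sum inequality stated in the lemma.

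The principal difficulty lies in the necessity direction, specifically in justifying that a common polyhedral Lyapunov function exists whenever the switched system is globally asymptotically stable under arbitrary switching, and in showing that the facet description of its sublevel set yields a matrix $L$ satisfying the intertwining relation with some $\bar A_\sigma$ of the required form. Since this is precisely \cite[Theorem~8]{lin2009stability}, in the present paper it is reasonable to cite that reference for this direction and rely only on the sufficiency argument in subsequent stability proofs, where an explicit $L$ and $\bar A_\sigma$ will be constructed for the switched affine systems arising from Q-learning.
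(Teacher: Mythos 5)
The paper offers no proof of this lemma at all: it is imported verbatim as Theorem~8 of \cite{lin2009stability}, so there is no internal argument to compare against. Your proposal therefore does strictly more than the paper, and the part you actually carry out is sound. The sufficiency direction via $V(x)=\|Lx\|_\infty$, the intertwined dynamics $\dot y_t=\bar A_{\sigma_t}y_t$ on the range of $L$, and the Dini-derivative estimate
\[
D^+\|y_t\|_\infty \;\le\; \max_{i:\,|y_{t,i}|=\|y_t\|_\infty}\Bigl([\bar A_{\sigma_t}]_{ii}+\sum_{j\neq i}\bigl|[\bar A_{\sigma_t}]_{ij}\bigr|\Bigr)\|y_t\|_\infty \;\le\; -c\,\|y_t\|_\infty
\]
is exactly the standard common-Lyapunov-function argument behind the cited theorem (take the maximum over all indices attaining the norm, not a single one, so that the one-sided derivative bound is valid when the maximizer is not unique); uniformity of $c>0$ over the finite mode set and full column rank of $L$ then give global exponential convergence of $x_t$ under arbitrary switching. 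Two caveats. First, for the relation $LA_\sigma=\bar A_\sigma L$ to typecheck, $\bar A_\sigma$ must be $m\times m$ (and the row sums must run over $j\in\{1,\dots,m\}$); the statement's ``$\bar A_\sigma\in{\mathbb R}^{m\times n}$'' is a typo inherited from the paper, and your argument implicitly uses the correct $m\times m$ version. Second, your necessity direction is not a proof but an outline that defers the converse polyhedral Lyapunov theorem (the Molchanov--Pyatnitskiy-type construction of $L$ from facet normals of a contractive polytope) back to the very reference being proved; that is acceptable here precisely because the paper itself only ever invokes the ``if'' direction, in \cref{lem:linear-system-stability} and \cref{thm:averaging-Q-learning-stability}, where explicit $L$ and $\bar A_\sigma$ are exhibited.
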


\subsection{ODE-based stochastic approximation}\label{sec:ODE-stochastic-approximation}

Due to its generality, the convergence analyses of many RL algorithms rely on the ODE (ordinary differential equation) approach~\citep{bhatnagar2012stochastic,kushner2003stochastic}. It analyzes convergence of general stochastic recursions by examining stability of the associated ODE model based on the fact that the stochastic recursions with diminishing step-sizes approximate the corresponding ODEs in the limit. One of the most popular approach is based on the Borkar and Meyn theorem~\citep{borkar2000ode}. We now briefly introduce the Borkar and Meyn's ODE approach for analyzing convergence of the general stochastic recursions
\begin{align}
&\theta_{k+1}=\theta_k+\alpha_k (f(\theta_k)+\varepsilon_{k+1})\label{eq:general-stochastic-recursion}
\end{align}
where $f:{\mathbb R}^n \to {\mathbb R}^n$ is a nonlinear mapping. Basic technical assumptions are given below.
\begin{assumption}\label{assumption:1}
$\,$\begin{enumerate}
\item The mapping $f:{\mathbb R}^n  \to {\mathbb R}^n$ is
globally Lipschitz continuous and there exists a function
$f_\infty:{\mathbb R}^n\to {\mathbb R}^n$ such that
\begin{align*}
&\lim_{c\to \infty}\frac{f(c x)}{c}= f_\infty(x),\quad \forall x \in {\mathbb R}^n.
\end{align*}

\item The origin in ${\mathbb R}^n$ is an asymptotically stable
equilibrium for the ODE $\dot x_t=f_\infty (x_t)$.

\item There exists a unique globally asymptotically stable equilibrium
$\theta^e\in {\mathbb R}^n$ for the ODE $\dot x_t=f(x_t)$, i.e., $x_t\to\theta^e$ as $t\to\infty$.

\item The sequence $\{\varepsilon_k,{\cal G}_k,k\ge 1\}$ with ${\cal G}_k=\sigma(\theta_i,\varepsilon_i,i\le k)$
is a Martingale difference sequence. In addition, there exists a constant $C_0<\infty $ such that for any initial $\theta_0\in
{\mathbb R}^n$, we have ${\mathbb E}[\|\varepsilon_{k+1} \|^2 |{\cal G}_k]\le C_0(1+\|\theta_k\|^2),\forall k \ge 0$.

\item The step-sizes satisfy
$\alpha_k>0, \sum_{k=0}^\infty {\alpha_k}=\infty, \sum_{k=0}^\infty{\alpha_k^2}<\infty$.
\end{enumerate}
\end{assumption}

\begin{lemma}[{\cite[Borkar and Meyn theorem]{borkar2000ode}}]\label{lemma:Borkar}
Suppose that~\cref{assumption:1} holds. For any initial $\theta_0\in
{\mathbb R}^n$, $\sup_{k\ge 0} \|\theta_k\|<\infty$
with probability one. In addition, $\theta_k\to\theta^e$ as
$k\to\infty$ with probability one.
\end{lemma}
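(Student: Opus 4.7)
The plan is to prove the statement in two stages: first establish almost sure boundedness $\sup_k \|\theta_k\| < \infty$, then invoke a standard ODE-tracking argument to conclude convergence to $\theta^e$. The boundedness step is the technically demanding part of the Borkar--Meyn machinery, while convergence falls out relatively cleanly once boundedness is in hand.

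For the boundedness step, I would build a scaled comparison with the limiting ODE $\dot x_t = f_\infty(x_t)$. Define $t(k) := \sum_{i=0}^{k-1}\alpha_i$, pick an increasing sequence $\{T_n\}$ of ``reset'' times on this clock (e.g.\ spaced by a fixed horizon $T>0$), and let $k(n)$ be the largest $k$ with $t(k)\le T_n$. Set $r(n) := \max\{\|\theta_{k(n)}\|, 1\}$ and define the rescaled iterates $\hat\theta_k^{(n)} := \theta_k / r(n)$ for $k(n)\le k\le k(n+1)$. By the positive-homogeneity at infinity in Assumption~1(i), the recursion for $\hat\theta^{(n)}_k$ approximately reads $\hat\theta^{(n)}_{k+1} = \hat\theta^{(n)}_k + \alpha_k\bigl(f_\infty(\hat\theta^{(n)}_k) + o(1) + \hat\varepsilon_{k+1}\bigr)$ with $\hat\varepsilon = \varepsilon/r(n)$. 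Using Assumption~1(iv), a maximal inequality for martingales (Doob/Burkholder), and $\sum_k \alpha_k^2 <\infty$, the rescaled noise is shown to vanish uniformly over each window almost surely. A Gronwall-type estimate then forces the piecewise-linear interpolation of $\hat\theta^{(n)}$ to stay within $O(1)$ of the trajectory of $\dot x = f_\infty(x)$ started at $\hat\theta^{(n)}_{k(n)}$, which has unit norm. Asymptotic stability of the origin for this limiting ODE (Assumption~1(ii)) gives a uniform contraction: there exists $\rho<1$ and $T$ large enough so that any unit-norm initial condition yields $\|x_T\|\le \rho$. Consequently $\|\hat\theta^{(n)}_{k(n+1)}\|\le \rho + o(1)$, i.e.\ $\|\theta_{k(n+1)}\|\le (\rho+o(1)) r(n)$. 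Whenever $r(n)$ is large, $r(n+1)\le (\rho + o(1))r(n) < r(n)$, so $r(n)$ cannot grow unboundedly; between resets a standard Gronwall bound controls the iterates. This yields $\sup_k \|\theta_k\| < \infty$ a.s.

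Given boundedness, the convergence step is standard. Form the piecewise-linear interpolation $\bar\theta(t)$ of $\theta_k$ on the clock $t(k)$. Using Assumption~1(v) together with the martingale bound on $\varepsilon_k$ and the Lipschitz property of $f$, show via Gronwall that on every finite window $[s,s+T]$,
\begin{align*}
\sup_{t\in[s,s+T]} \bigl\|\bar\theta(t) - x^{s}_t\bigr\| \;\longrightarrow\; 0 \quad \text{a.s.\ as } s\to\infty,
\end{align*}
where $x^s_t$ solves $\dot x = f(x)$ with $x^s_s = \bar\theta(s)$. Because $\theta^e$ is the unique globally asymptotically stable equilibrium of $\dot x = f(x)$ (Assumption~1(iii)), one may pick $T$ so large that every trajectory of $\dot x = f(x)$ starting in the (a.s.\ bounded) closure of the iterates reaches within $\epsilon/2$ of $\theta^e$ within time $T$. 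Combined with the tracking estimate, this forces $\limsup_k \|\theta_k - \theta^e\| \le \epsilon$ a.s.\ for every $\epsilon>0$, proving $\theta_k \to \theta^e$ a.s.

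The principal obstacle is the boundedness step, specifically the careful handling of the scaled iterates and the uniform control of the rescaled martingale noise across the window $[T_n, T_{n+1}]$ when $r(n)$ itself is a random quantity depending on the history. The delicate point is that $r(n)$ appears in the denominator of $\hat\varepsilon_{k+1}$, so one must invoke the conditional second-moment bound in Assumption~1(iv) (which scales as $1+\|\theta_k\|^2$) to ensure $\mathbb{E}[\|\hat\varepsilon_{k+1}\|^2 \mid \mathcal{G}_k]$ remains bounded uniformly in $n$, then combine this with a Burkholder-type maximal inequality and the $\ell^2$-summability of step sizes to get uniform smallness of the noise contribution on each window almost surely. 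Everything else --- the Gronwall comparison, the existence and uniqueness guaranteed by \cref{lemma:existence}, and the limit extraction --- is routine once this uniform noise control is secured.
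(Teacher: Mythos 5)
The paper does not prove this lemma at all --- it is imported verbatim as a cited result from \cite{borkar2000ode}, so there is no in-paper argument to compare against. Your sketch faithfully reproduces the standard Borkar--Meyn proof (rescaling by $r(n)$ and contracting along the limiting ODE $\dot x = f_\infty(x)$ for boundedness, then the usual Gronwall-based ODE-tracking argument with the globally asymptotically stable equilibrium $\theta^e$ for convergence), and correctly identifies the conditional second-moment bound in Assumption~\ref{assumption:1}(iv) as the ingredient that tames the rescaled martingale noise; it is a sound outline of the original proof.
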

The Borkar and Meyn theorem states that under~\cref{assumption:1}, the stochastic process $(\theta_k)_{k=0}^\infty$ generated by~\eqref{eq:general-stochastic-recursion} is bounded and converges to $\theta^e$ almost surely.

\section{Revisit Q-learning}
We now briefly review the standard Q-learning and its convergence. Recall that the standard Q-learning updates
\begin{equation}
\begin{split}
Q_{k+1}(s_k,a_k)=&Q_k(s_k,a_k)+\\
&\alpha_k(s_k,a_k)\left\{r_{a_k}(s_k,s_{k+1})+\gamma \max_{a \in {\cal A}} Q_k (s_{k+1},a_k)-Q_k(s_k,a_k)\right\},
\end{split}
\end{equation}
where  $0 \le \alpha_k(s,a) \le 1$ is called the learning rate associated
with the state-action pair $(s,a)$ at iteration $k$. This value is assumed to be zero if
$(s,a)\ne (s_k,a_k)$. If
\begin{align*}
&\sum_{k=0}^\infty{\alpha_k(s,a)}= \infty ,\quad \sum_{k=0}^\infty {\alpha_k^2 (s,a)}<\infty,
\end{align*}
and every state-action pair is visited infinitely often, then the iterate is guaranteed to converge to $Q^*$ with probability one. Note that the state-action can be visited arbitrarily, which is more general than stochastic visiting rules.

To analyze the convergence based on the switching system model, we consider the stronger assumption that $\{(s_k,a_k)\}_{k=0}^{\infty}$ is a sequence of i.i.d. random variables with an identical underlying probability distribution, $d_a(s),s\in {\cal S},a \in {\cal A}$, of the state and action pair $(s,a)$. For example, this can be a stationary state-action distribution under the behavior policy, where the behavior policy is the policy by which the RL agent actually behaves to collect experiences. This assumption is common in the ODE approaches for Q-learning and TD-learning~\citep{sutton1988learning}. This assumption can be relaxed by considering a time-varying distribution. However, this direction will not be addressed in this paper to simplify the presentation of the proofs.

Throughout the paper, we assume that
\begin{assumption}\label{assumption:positive-distribution}
$d_a(s)> 0$ holds for all $s\in {\cal S},a \in {\cal A}$.
\end{assumption}
 Under this assumption, the modified standard Q-learning is given in~\cref{algo:standard-Q-learning}. Compared to the original version, the step-size $\alpha_k$ does not depend on the state-action pair in this version. With a suitable choice on the step-size,~\cref{algo:standard-Q-learning} converges to the optimal $Q^*$ with probability one.
\begin{algorithm}[t]
\caption{Standard Q-Learning}
  \begin{algorithmic}[1]
    \State Initialize $Q_0 \in {\mathbb R}^{|{\cal S}||{\cal A}|}$ randomly.
    \For{iteration $k=0,1,\ldots$}
    	\State Sample $(s,a)\sim d_a(s)$
        \State Sample $s'\sim P_a(s,\cdot)$ and $r_a(s,s')$
        \State Update $Q_{k+1}(s,a)=Q_k(s,a)+\alpha_k \{r_a(s,s')+\gamma\max_{a \in {\cal A}} Q_k(s',a)-Q_k (s,a)\}$
    \EndFor

  \end{algorithmic}\label{algo:standard-Q-learning}
\end{algorithm}

\begin{theorem}\label{thm:Q-learning-convergence}
Assume that the step-sizes satisfy
\begin{align}
&\alpha_k>0,\quad \sum_{k=0}^\infty {\alpha_k}=\infty,\quad \sum_{k=0}^\infty{\alpha_k^2}<\infty.\label{eq:step-size-rule}
\end{align}
Then, $Q_k \to Q^*$ with probability one.
\end{theorem}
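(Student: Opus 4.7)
The plan is to verify the hypotheses of the Borkar--Meyn theorem (\cref{lemma:Borkar}) for the vectorized Q-learning recursion and then invoke it directly. First I would stack the Q-values into a vector $Q\in\mathbb R^{|\mathcal S||\mathcal A|}$ and rewrite \cref{algo:standard-Q-learning} as
\begin{align*}
Q_{k+1} = Q_k + \alpha_k\bigl(f(Q_k) + \varepsilon_{k+1}\bigr),
\end{align*}
where $f(Q) := \mathbb E[\,e_{(s,a)}(r_a(s,s')+\gamma\max_{a'}Q(s',a')-Q(s,a))\,]$ with the expectation taken under $(s,a)\sim d_a(s)$, $s'\sim P_a(s,\cdot)$, and $\varepsilon_{k+1}$ is the obvious centered residual. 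Writing $D$ for the diagonal matrix with entries $d_a(s)>0$, $P$ for the stacked transition matrix, and $R$ for the expected reward vector, this reduces to the compact form $f(Q) = D\bigl(R + \gamma P\,\mathbf{M}(Q) - Q\bigr)$, where $\mathbf{M}(Q)$ denotes the $|\mathcal S|$-vector with entries $\max_{a'}Q(s',a')$. The key observation, following the paper's unifying theme, is that for any fixed deterministic policy $\pi$ the operator $\mathbf{M}(Q) = \Pi_\pi Q$ becomes linear, so $f$ is a switched affine vector field with a state-feedback switching rule $\sigma(Q)=\pi_Q$ (the greedy policy w.r.t.\ $Q$).

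Next I would check the routine conditions of \cref{assumption:1}. Global Lipschitz continuity of $f$ follows from $1$-Lipschitz continuity of $\max$ under $\|\cdot\|_\infty$ together with boundedness of $D, P$. The scaling limit $f_\infty(Q) = \lim_{c\to\infty} f(cQ)/c = D(\gamma P\mathbf{M}(Q) - Q)$ exists because $R/c\to 0$ and $\mathbf M$ is positively homogeneous of degree one; $f_\infty$ is itself a switched linear system with the same state-feedback switching rule. The equilibrium of $\dot Q=f(Q)$ is the unique solution of the Bellman optimality equation $Q^*=R+\gamma P\mathbf M(Q^*)$ (uniqueness follows from the $\gamma$-contraction of the Bellman operator in $\|\cdot\|_\infty$, and $D\succ 0$ under \cref{assumption:positive-distribution}). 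The martingale noise condition is standard: conditional on the past, $\varepsilon_{k+1}$ is bounded linearly in $\|Q_k\|$ because $r_a(s,s')$ has bounded variance and $\max$ is Lipschitz, giving $\mathbb E[\|\varepsilon_{k+1}\|^2\mid\mathcal G_k]\le C_0(1+\|Q_k\|^2)$. The step-size conditions are exactly the hypothesis~\eqref{eq:step-size-rule}.

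The substantive obstacle is the remaining two conditions: global asymptotic stability of the origin for $\dot x_t = f_\infty(x_t)$ and of $Q^*$ for $\dot x_t = f(x_t)$. Both live outside the arbitrary-switching regime covered by \cref{lemma:fundamental-stability-lemma} because the active mode is chosen by a state-feedback rule. The strategy is to bracket the true trajectory between two switched linear comparison systems via \cref{lemma:comparision-principle}. For the upper comparison I would exploit $\mathbf M(x)\le \Pi_{\pi_x^{\uparrow}} x$ for a carefully chosen upper policy (e.g.\ the greedy policy w.r.t.\ $x_t$ itself, viewed as a fixed piecewise-constant signal), and for the lower comparison use $\mathbf M(x)\ge \Pi_\pi x$ for any fixed $\pi$; the resulting modes $A_\pi = D(\gamma P\Pi_\pi - I)$ are each Hurwitz because $\gamma P\Pi_\pi$ is substochastic with spectral radius at most $\gamma<1$ and $D\succ 0$. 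Quasi-monotonicity, needed to apply the comparison lemma, holds because $D,P,\Pi_\pi$ are componentwise nonnegative so the off-diagonal entries of each mode matrix $A_\pi$ are nonnegative (Metzler). Finally, I would construct a common coordinate transformation $L$ (built from $\pm I$ stacked, which realizes the $\|\cdot\|_\infty$ norm) putting every $A_\pi$ in strictly negative row-dominating diagonal form, so that \cref{lemma:fundamental-stability-lemma} delivers uniform asymptotic stability of the comparison systems. Sandwiching $x_t$ between them forces $x_t\to 0$ (for $f_\infty$) and $Q_t\to Q^*$ (for $f$, after translating the equilibrium to the origin). Applying \cref{lemma:Borkar} then yields $Q_k\to Q^*$ almost surely, which completes the proof.
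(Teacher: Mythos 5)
Your proposal is correct and follows essentially the same route as the paper: verify the Borkar--Meyn conditions for the vectorized recursion $Q_{k+1}=Q_k+\alpha_k(f(Q_k)+\varepsilon_{k+1})$ with $f(Q)=DR+\gamma DP\Pi_{\pi_Q}Q-DQ$, and establish the two stability conditions by sandwiching the state-feedback switched affine ODE between an upper switched-linear and a lower fixed-linear comparison system via the quasi-monotone comparison principle, then checking the strictly negative row-dominating diagonal condition for each mode $D(\gamma P\Pi_\pi-I)$ (the paper simply takes $L=I$ rather than a stacked $\pm I$ transformation, and proves quasi-monotonicity of the switched upper vector field by a direct computation with the $\max$ rather than appealing to the Metzler property of individual modes, but these are cosmetic differences).
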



\section{Convergence of Q-learning from Switching System Theory}

In this section, we study a switching system-based ODE model of Q-learning and prove the convergence of Q-learning in~\cref{thm:Q-learning-convergence} based on the switching system analysis. We first introduce the following compact notations:
\begin{align*}
P:=& \begin{bmatrix}
   P_1\\
   \vdots\\
   P_{|{\cal A}|}\\
\end{bmatrix} \in {\mathbb R}^{|{\cal S}| \times |{\cal S}||{\cal A}|} ,\quad R:= \begin{bmatrix}
   R_1 \\
   \vdots \\
   R_{|{\cal A}|} \\
\end{bmatrix} \in {\mathbb R}^{|{\cal S}||{\cal A}|},\quad Q:= \begin{bmatrix}
   Q_1\\
  \vdots\\
   Q_{|{\cal A}|}\\
\end{bmatrix}\in {\mathbb R}^{|{\cal S}||{\cal A}|},\\
D_a:=& \begin{bmatrix}
   d_a(1) & & \\
   & \ddots & \\
   & & d_a(|{\cal S}|)\\
\end{bmatrix}\in {\mathbb R}^{|{\cal S}| \times |{\cal S}|},\quad D:=\begin{bmatrix}
   D_1 & & \\
    & \ddots  & \\
    & & D_{|{\cal A}|} \\
\end{bmatrix}\in {\mathbb R}^{|{\cal S}||{\cal A}| \times |{\cal S}||{\cal A}|},
\end{align*}
where $Q_a= Q(\cdot,a)\in {\mathbb R}^{|{\cal S}|},a\in {\cal A}$ and $R_a(s):={\mathbb E}[r_a(s,s')|s,a]$. Note that under~\cref{assumption:positive-distribution}, $D$ is a nonsingular diagonal matrix with strictly positive diagonal elements.

\subsection{Q-learning as switched linear system}
Using the notation introduced, the update in~\cref{algo:standard-Q-learning} can be rewritten as
\begin{equation*}
\begin{split}
Q_{k+1}=Q_k&+\alpha_k \{(e_a\otimes e_s )(e_a\otimes e_s)^T R\\
&+\gamma(e_a\otimes e_s)(e_{s'})^T \max_{a\in {\cal A}}Q_k(\cdot,a)-(e_a  \otimes e_s)(e_a\otimes e_s)^T Q_k\},
\end{split}
\end{equation*}
where $e_s \in {\mathbb R}^{|{\cal S}|}$ and $e_a \in {\mathbb R}^{|{\cal A}|}$ are $s$-th basis vector (all components are $0$ except for the $s$-th component which is $1$) and $a$-th basis vector, respectively. For any deterministic policy, $\pi:{\cal S}\to {\cal A}$, we define the corresponding distribution vector
\begin{align*}
&\vec{\pi}(s):=e_{\pi(s)}\in \Delta_{|{\cal S}|},
\end{align*}
where $\Delta_{|{\cal S}|}$ is the set of all probability distributions over ${\cal S}$, and the matrix
\begin{align*}
\Pi_\pi:=\begin{bmatrix}
   \vec{\pi}(1)^T \otimes e_1^T\\
   \vec{\pi}(2)^T \otimes e_2^T\\
    \vdots\\
   \vec{\pi}(|S|)^T \otimes e_{|S|}^T \\
\end{bmatrix}\in {\mathbb R}^{|{\cal S}| \times |{\cal S}||{\cal A}|}.
\end{align*}
Denoting $\pi_Q(s):=\argmax_{a\in {\cal A}} e_s^T Q_a\in {\cal A}$, the above update can be further expressed as
\begin{align}
Q_{k+1}=Q_k+\alpha_k \{DR+\gamma DP\Pi_{\pi_{Q_k}}Q_k-DQ_k +\varepsilon_{k+1}\},\label{eq:1}
\end{align}
where
\begin{align*}
\varepsilon_{k+1}=&(e_a\otimes e_s )(e_a\otimes e_s)^T R+\gamma (e_a\otimes e_s )(e_{s'})^T \Pi_{\pi_{Q_k}}Q_k\\
&-(e_a\otimes e_s)(e_a\otimes e_s)^T Q_k-(DR+\gamma DP\Pi_{\pi_{Q_k}}Q_k-DQ_k).
\end{align*}
We note that, for any $\pi \in \Theta$, $P\Pi_{\pi}$ is the state-action pair transition probability matrix under the deterministic policy $\pi$. Using the Bellman equation $(\gamma DP\Pi_{\pi_{Q^*}}-D)Q^*+DR=0$,~\eqref{eq:1} is rewritten by
\begin{equation}\label{eq:Q-learning-stochastic-recursion-form}
\begin{split}
(Q_{k+1}-Q^*)=(Q_k-Q^*)&+\alpha_k \{(\gamma DP\Pi_{\pi_{Q_k}}-D)(Q_k-Q^*)\\
&+\gamma DP(\Pi_{\pi_{Q_k}}-\Pi_{\pi_{Q^*}})Q^*+\varepsilon_{k+1}\}.
\end{split}
\end{equation}

As discussed in~\cref{sec:ODE-stochastic-approximation}, the convergence of~\eqref{eq:Q-learning-stochastic-recursion-form} can be analyzed by evaluating the stability of the corresponding continuous-time ODE
\begin{equation}\label{eq:4}
\begin{array}{rl}
\frac{d}{dt}(Q_t-Q^*)&=(\gamma DP\Pi_{\pi_{Q_t}}-D)(Q_t-Q^*)+\gamma DP(\Pi_{\pi_{Q_t}}-\Pi_{\pi_{Q^*}})Q^*,\\
 Q_0-Q^*&=z \in {\mathbb R}^{|{\cal S}||{\cal A}|},
\end{array}
\end{equation}
which is a linear switching system. More precisely, if we define a one-to-one map $\psi :\Theta \to \{1,2,\ldots ,|\Theta |\}$, where $\Theta$ is the set of all deterministic policies, $x_t:= Q_t-Q^*$, and
\begin{align*}
(A_{\psi(\pi)},b_{\psi(\pi)}):=(\gamma DP\Pi_\pi -D,\gamma DP(\Pi_{\pi}-\Pi_{\pi_{Q^*}})Q^*)
\end{align*}
for all $\pi\in \Theta$, then~\eqref{eq:4} can be represented by the affine switching system
\begin{align}
\frac{d}{dt}x_t= A_{\sigma(x_t)}x_t + b_{\sigma(x_t)},\quad x_0=z\in {\mathbb R}^{|{\cal S}||{\cal A}|},\label{eq:swithcing-system-form}
\end{align}
where, $\sigma: {\mathbb R}^{|{\cal S}||{\cal A}|}\to \{1,2,\ldots ,|\Theta |\}$ is a state-feedback switching policy defined by $\sigma(x_t):=\psi(\pi_{Q_t})$, $\pi_{Q_t}(s)=\argmax_{a\in {\cal A}} e_s^T Q_{t,a}$.

Since~\eqref{eq:swithcing-system-form} is a switching system with a state-feedback switching policy, it may cause arbitrary switching behaviors. It is unclear whether its solution exists over all $t\geq 0$ and whether the solution is unique. We establish the existence and uniqueness of its solution, which follows from the global Lipschitz continuity of the affine mapping.
\begin{proposition}\label{prop:Lipschitz-f}
Define
\begin{align*}
f(x)=(\gamma DP\Pi_{\pi_x}-D)x+DR.
\end{align*}
Then, $f$ is globally Lipschitz continuous.
\end{proposition}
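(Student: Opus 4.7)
The plan is to exploit the observation that the only potentially discontinuous piece of $f$, namely the mapping $x \mapsto \Pi_{\pi_x} x$, is actually a componentwise maximum and is therefore globally Lipschitz, even though the selector $\pi_x$ itself is discontinuous at ties. Writing $x = \begin{bmatrix} x_1^T & \cdots & x_{|{\cal A}|}^T \end{bmatrix}^T$ with $x_a \in {\mathbb R}^{|{\cal S}|}$, the definition of $\Pi_\pi$ together with $\pi_x(s) = \argmax_{a \in {\cal A}} e_s^T x_a$ yields
\[
(\Pi_{\pi_x} x)_s \;=\; e_s^T x_{\pi_x(s)} \;=\; \max_{a \in {\cal A}} e_s^T x_a, \qquad s \in {\cal S}.
\]
Thus $\Pi_{\pi_x} x$ is just the state-wise maximum of the $Q$-values stacked over $s$. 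Although $\pi_x$ jumps across ties, the value $\Pi_{\pi_x} x$ is continuous there because the candidate maximizers coincide on the boundary.

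Next I would bound the Lipschitz constant of this mapping using the elementary inequality $|\max_i \alpha_i - \max_i \beta_i| \le \max_i |\alpha_i - \beta_i|$. For any $x,y \in {\mathbb R}^{|{\cal S}||{\cal A}|}$,
\[
|(\Pi_{\pi_x} x - \Pi_{\pi_y} y)_s|
= \left| \max_{a \in {\cal A}} e_s^T x_a - \max_{a \in {\cal A}} e_s^T y_a \right|
\le \max_{a \in {\cal A}} |e_s^T (x_a - y_a)|
\le \|x - y\|_\infty,
\]
so $\|\Pi_{\pi_x} x - \Pi_{\pi_y} y\|_\infty \le \|x - y\|_\infty$, and equivalence of norms on a finite-dimensional space transfers the bound to any chosen norm up to a constant.

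The remaining portions of $f$ are affine in $x$: the piece $-Dx + DR$ is Lipschitz with constant $\|D\|$, while premultiplying the previous mapping by $\gamma DP$ contributes a factor of $\gamma \|DP\|$. Applying the triangle inequality yields
\[
\|f(x) - f(y)\| \;\le\; \bigl( \gamma \|DP\| + \|D\| \bigr) \|x - y\|
\]
(in a consistent choice of norms), which establishes global Lipschitz continuity. The one step that demands genuine care is the identification of $\Pi_{\pi_x} x$ with the state-wise maximum, since this is precisely where the discontinuity of the selector is absorbed; everything afterwards is a routine collection of norm estimates.
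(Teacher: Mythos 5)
Your proposal is correct and follows essentially the same route as the paper's own proof: both identify $\Pi_{\pi_x}x$ with the state-wise maximum $\max_{a}e_s^T x_a$, apply the elementary bound $|\max_a \alpha_a - \max_a \beta_a| \le \max_a|\alpha_a-\beta_a|$ to get $\|\Pi_{\pi_x}x-\Pi_{\pi_y}y\|_\infty \le \|x-y\|_\infty$, and then collect the affine terms to obtain the Lipschitz constant $\gamma\|DP\|_\infty + \|D\|_\infty$. No substantive differences.
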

\begin{proof}
The proof is completed by the inequalities
\begin{align*}
\|f(x)-f(y)\|_\infty =&\| (\gamma DP\Pi_{\pi_x}-D)x-(\gamma DP\Pi_{\pi_y}-D)y \|_\infty\\
\le& \| \gamma DP \|_\infty \|\Pi_{\pi_x}x-\Pi_{\pi_y}y\|_\infty + \|D\|_\infty \|x-y\|_\infty\\
=& \|\gamma DP\|_\infty \max_{s\in {\cal S}}|\max_{a\in {\cal A}} x_a(s)-\max_{a \in {\cal A}}y_a(s)| + \|D\|_\infty \|x-y\|_\infty\\
\le& \|\gamma DP\|_\infty  \max_{s\in {\cal S}} \max_{a\in {\cal A}} |x_a(s)-y_a(s)| + \|D\|_\infty \|x-y\|_\infty\\
=& \|\gamma DP\|_\infty \|x-y\|_\infty + \|D\|_\infty \|x-y\|_\infty\\
\le& \left(\|\gamma DP\|_\infty + \|D\|_\infty \right)\|x - y\|_\infty,
\end{align*}
indicating that $f$ is globally Lipschitz continuous with respect to the $\|\cdot\|_\infty$ norm. This completes the proof.
\end{proof}

Invoking~\cref{lemma:existence}, we then have the following result
\begin{proposition}\label{prop:existence-of-solution}
The solution of the switching system~\eqref{eq:swithcing-system-form} exists and is unique for all $t\geq 0$ and $x(0) \in {\mathbb R}^n$.
\end{proposition}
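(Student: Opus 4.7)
The plan is to reduce the claim to \cref{lemma:existence} by leveraging the Lipschitz estimate already proved in \cref{prop:Lipschitz-f}. First I would observe that the switching-system dynamics~\eqref{eq:swithcing-system-form} is simply~\eqref{eq:4} written in the shifted coordinate $x_t = Q_t - Q^*$, and that in the original coordinate it reduces to the unshifted system $\dot Q_t = f(Q_t)$. Indeed, expanding
\[
(\gamma DP\Pi_{\pi_{Q_t}}-D)(Q_t-Q^*)+\gamma DP(\Pi_{\pi_{Q_t}}-\Pi_{\pi_{Q^*}})Q^*
= (\gamma DP\Pi_{\pi_{Q_t}}-D)Q_t + DQ^* - \gamma DP\Pi_{\pi_{Q^*}}Q^*,
\]
and applying the Bellman identity $(\gamma DP\Pi_{\pi_{Q^*}}-D)Q^*+DR = 0$ to replace the last two terms by $DR$, the right-hand side collapses to $(\gamma DP\Pi_{\pi_{Q_t}}-D)Q_t + DR = f(Q_t)$, where $f$ is precisely the mapping of \cref{prop:Lipschitz-f}. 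Consequently~\eqref{eq:swithcing-system-form} is equivalent to $\dot x_t = f(x_t+Q^*)$.

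Next I would invoke \cref{prop:Lipschitz-f}, which established that $f$ is globally Lipschitz continuous with respect to $\|\cdot\|_\infty$. A constant translation does not affect Lipschitz continuity, so the map $x\mapsto f(x+Q^*)$ inherits the same Lipschitz constant. \cref{lemma:existence} then applies directly to the ODE $\dot x_t = f(x_t+Q^*)$ and yields a unique solution $x_t$ for every initial condition $x(0)\in \mathbb{R}^{|\mathcal{S}||\mathcal{A}|}$ and all $t\ge 0$, which is the desired conclusion.

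I do not foresee any substantive obstacle here: the state-feedback discontinuity of the switching signal $\sigma(x_t)$ could in principle threaten well-posedness, but it is absorbed entirely into the $\max$ operator inside $\Pi_{\pi_x}$, whose Lipschitz behavior was already handled inside the proof of \cref{prop:Lipschitz-f}. The only bookkeeping worth stating explicitly is the algebraic identification above that recasts the affine switching system in terms of the globally Lipschitz mapping $f$; once that is in hand, existence and uniqueness follow immediately from the classical result cited in \cref{lemma:existence}.
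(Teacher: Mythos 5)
Your proof is correct and follows the same route as the paper, which simply invokes \cref{lemma:existence} together with the global Lipschitz continuity established in \cref{prop:Lipschitz-f}. The only addition is that you make explicit the algebraic identification (via the Bellman equation) of the affine switching system~\eqref{eq:swithcing-system-form} with the translated map $x \mapsto f(x+Q^*)$, a step the paper leaves implicit since~\eqref{eq:swithcing-system-form} was derived from $\dot Q_t = f(Q_t)$ in the first place.
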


\begin{remark}
The ODE model of the Q-learning in~\cite{borkar1997analog} is
\begin{align}
\frac{d}{dt}Q_t=(\gamma P\Pi_{Q_t}-I)Q_t+R={\bf T}Q_t-Q_t,\label{eq:ode-operator}
\end{align}
where ${\bf T}Q_t=\gamma P\Pi_{Q_t} Q_t-R$ is the Bellman operator. The approach in~\cite{borkar1997analog} proves that an ODE of the form~\eqref{eq:ode-operator} is globally asymptotically stable if the operator ${\bf T}$ is a contraction with respect to some norm, and the unique fixed point of ${\bf T}$ is the unique equilibrium point of~\eqref{eq:ode-operator}. The ODE in~\eqref{eq:4} is reduced to~\eqref{eq:ode-operator} with some modifications and setting $D =I$. The Q-learning corresponding to~\eqref{eq:ode-operator} should be a synchronous version~\citep{even2003learning}, i.e., all the state-action pair is visited one time at every iteration of the recursion. To explain this, consider a stochastic approximation $\hat P$ of $P$ such that ${\mathbb E}[\hat P] = P$. One possible approach to construct such $\hat P$ is
\begin{align*}
&{\mathbb E}\left[ \sum_{i\in {\cal S},j \in {\cal A}}{(e_j\otimes e_i)e_{s'_{ij}}^T} \right]=\sum_{s\in {\cal S},a \in {\cal A}}{{\mathbb E}[(e_a  \otimes e_s)e_{s'}^T]}=P,
\end{align*}
where $s'_{ij}$ is an independently sampled next state for the current state $i\in {\cal S}$. This implies that to construct $\hat P$, each state should be visited at least one time. Similarly, to obtain a stochastic approximation whose mean is ${\bf T}Q_t$ in~\eqref{eq:ode-operator}, every state should be visited at least one time at every iteration.

Under the assumption of samplings from an identical distribution, the corresponding ODE is
\begin{align*}
\frac{d}{dt}Q_t=(\gamma DP\Pi_{Q_t}-D)Q_t + DR = {\bf F}Q_t -Q_t,
\end{align*}
where ${\bf F}Q_t:=\gamma DP\Pi_{Q_t}Q_t-DQ_t+DR+Q_t$. However, it is not trivial to prove if ${\bf F}$ is a contraction in this case. In this sense, the approach in~\cite{borkar1997analog} cannot be directly applied for asymptotic stability of the general system~\eqref{eq:4}.
\end{remark}

\subsection{Stability analysis}

Note that the proving the global asymptotic stability of~\eqref{eq:swithcing-system-form} without the affine term is relevantly straightforward based on existing results, e.g.,~\citet[Theorem~8]{lin2009stability}. However, with the affine term, the proof is no longer trivial with the existing approaches in switching system theories. In what follows, we show that by exploiting the special structure of the switching system and policy associated with the Q-learning update rule, the global asymptotic stability can still be proved. We first establish the asymptotic stability of the corresponding linear switching system.
\begin{lemma}\label{lem:linear-system-stability} Consider the affine switching system~\eqref{eq:swithcing-system-form}. The origin of the associated linear switching system
\begin{align*}
&\frac{d}{dt}x_t=A_{\sigma_t}x_t,
\end{align*}
is the unique globally asymptotically stable equilibrium point under arbitrary switchings, $\sigma_t$.
\end{lemma}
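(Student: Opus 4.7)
The plan is to apply the algebraic criterion of \cref{lemma:fundamental-stability-lemma} with the simplest possible choice $L = I_{|{\cal S}||{\cal A}|}$, in which case the intertwining requirement $LA_\sigma = \bar A_\sigma L$ is trivially satisfied with $\bar A_\sigma = A_\sigma$. The entire task then reduces to verifying that every subsystem matrix $A_\pi = \gamma DP\Pi_\pi - D$, for $\pi \in \Theta$, satisfies the strictly negative row dominating diagonal condition.

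Two structural facts make this check almost immediate. First, by \cref{assumption:positive-distribution}, $D$ is a diagonal matrix with strictly positive diagonal entries $d_i > 0$. Second, $P\Pi_\pi$ is row stochastic with nonnegative entries: each row $s$ of $\Pi_\pi$ has a single unit entry at column $(\pi(s),s)$ and hence is row stochastic; the stacked transition matrix $P$ is row stochastic block by block; and products of nonnegative row-stochastic matrices are again row stochastic. Therefore, for any row index $i$ (a state-action pair), the off-diagonal entries
\begin{align*}
[A_\pi]_{ij} = \gamma d_i [P\Pi_\pi]_{ij}, \qquad j \neq i,
\end{align*}
are nonnegative, and the diagonal entry $[A_\pi]_{ii} = \gamma d_i [P\Pi_\pi]_{ii} - d_i = d_i(\gamma [P\Pi_\pi]_{ii} - 1)$ is strictly negative since $[P\Pi_\pi]_{ii} \leq 1$ and $\gamma \in [0,1)$. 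Taking absolute values of the (already nonnegative) off-diagonals and summing gives
\begin{align*}
[A_\pi]_{ii} + \sum_{j\neq i} |[A_\pi]_{ij}| \;=\; \sum_{j} [A_\pi]_{ij} \;=\; \gamma d_i \sum_{j} [P\Pi_\pi]_{ij} - d_i \;=\; (\gamma-1)d_i \;<\; 0,
\end{align*}
and this bound is uniform across all $\pi \in \Theta$. Invoking \cref{lemma:fundamental-stability-lemma} then delivers the desired global asymptotic stability of the origin under arbitrary switching.

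The main obstacle is really bookkeeping rather than anything substantive: one has to be careful with Kronecker-product indexing to identify $\Pi_\pi$ as row stochastic and to track how $D$ scales each row of $P\Pi_\pi$. The deeper difficulty, which this lemma deliberately avoids by stripping off the affine term $b_\sigma$, will be the subsequent handling of the full switching system~\eqref{eq:swithcing-system-form}: because $b_{\sigma(x_t)}$ is state-dependent and nonzero in general, one cannot simply read off stability from the linear subsystems, and that step will presumably proceed by sandwiching the affine trajectory between upper and lower comparison systems via \cref{lemma:comparision-principle} and then applying the present lemma to those comparison systems.
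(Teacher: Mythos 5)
Your proof is correct and follows essentially the same route as the paper: apply \cref{lemma:fundamental-stability-lemma} with $L=I$, $\bar A_\sigma = A_\sigma$, and verify the strictly negative row dominating diagonal condition using the row-stochasticity of $P\Pi_\pi$ and the positive diagonal of $D$. Your row-sum computation yielding exactly $(\gamma-1)d_i<0$ is in fact slightly cleaner than the paper's intermediate bound, and your closing remark about the affine term correctly anticipates the comparison-system argument used for the full system.
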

\begin{proof}
The proof follows by applying~\cref{lemma:fundamental-stability-lemma} with $L = I,\bar A_\sigma=A_\sigma$.  In this case, the condition, $L A_\sigma=\bar A_\sigma L$ holds. It remains to prove the strictly negative row dominating diagonal property. For notational convenience, we definte $\Pi_\sigma$, $\sigma\in {\cal M}$ as $\Pi_{\pi_{Q_t^B}}$ such that $\sigma = \psi (\pi_{Q_t^B})$. Then, the property is proved by
\begin{align*}
[A_\sigma]_{ii}+\sum_{j \in \{1,2,\ldots ,n\} \backslash \{i\}} {|[A_\sigma]_{ij}|}=& [D]_{ii} [\gamma P\Pi_\sigma -I]_{ii}+\sum_{j\in \{ 1,2,\ldots,n\}\backslash \{ i\} } {[D]_{ii}|[\gamma P\Pi_\sigma-I]_{ij}|}\\
\le& [\gamma P\Pi_\sigma-I]_{ii}+\sum_{j \in \{1,2,\ldots,n\} \backslash \{ i\}} {|[\gamma P\Pi_\sigma-I]_{ij}|}\\
=& [\gamma P\Pi_\sigma]_{ii}-1 + \sum_{j\in \{1,2,\ldots,n\} \backslash \{i\}}{|[\gamma P\Pi_\sigma]_{ij}|}\\
=& [\gamma P\Pi_\sigma]_{ii}+\sum_{j\in\{1,2,\ldots,n\}\backslash \{ i\}}{|[\gamma P\Pi_\sigma]_{ij}|}-1\\
=&\gamma-1\\
<& 0,\quad\forall\sigma\in {\cal M},
\end{align*}
which proves the global asymptotic stability.
\end{proof}

We are now in position to prove the asymptotic stability of~\eqref{eq:swithcing-system-form} associated with Q-learning.
\begin{theorem}\label{thm:swithcing-system-stability}
The origin is the unique globally asymptotically stable equilibrium point of the affine switching system~\eqref{eq:swithcing-system-form}.
\end{theorem}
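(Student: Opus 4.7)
The plan is to sandwich the trajectory of the affine switching system~\eqref{eq:swithcing-system-form} between trajectories of two \emph{purely linear} switched systems (with no affine term) and invoke~\cref{lem:linear-system-stability} together with the vector comparison principle~\cref{lemma:comparision-principle}. The crux is to rewrite the affine term as a translated maximum and then sandwich that maximum above and below by pure max/min operators acting on $x$ alone.

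First, I would rewrite the right-hand side of~\eqref{eq:swithcing-system-form} in the closed form
\[
f(x) := A_{\sigma(x)}x + b_{\sigma(x)} = \gamma DP\,\mathbf{m}(Q^\ast + x) - \gamma DP\,\mathbf{m}(Q^\ast) - Dx,
\]
where $\mathbf{m}(Q)(s) := \max_{a\in \mathcal{A}} Q_a(s)$ and $\mathbf{m}^-(Q)(s) := \min_{a\in \mathcal{A}} Q_a(s)$ are the componentwise max/min over actions. Then I would verify that $f$ is quasi-monotone increasing in the sense of~\cref{def:quasi-monotone}: if $x \le y$ componentwise with $x_i = y_i$ at a single index $i$, monotonicity of $\mathbf{m}$ combined with nonnegativity of $DP$ gives $[\gamma DP\,\mathbf{m}(Q^\ast + x)]_i \le [\gamma DP\,\mathbf{m}(Q^\ast + y)]_i$, while $[Dx]_i = [Dy]_i$ since $D$ is diagonal, so $f_i(x) \le f_i(y)$.

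Next, I would use the elementary pointwise bounds
\[
\mathbf{m}^-(x) \;\le\; \mathbf{m}(Q^\ast + x) - \mathbf{m}(Q^\ast) \;\le\; \mathbf{m}(x),
\]
(which follow coordinatewise from $\max_a(u_a+v_a) \le \max_a u_a + \max_a v_a$ and $\max_a(u_a+v_a) \ge \max_a u_a + \min_a v_a$). Multiplying by $\gamma DP \ge 0$ and subtracting $Dx$ produces
\[
\underline f(x) := \gamma DP\,\mathbf{m}^-(x) - Dx \;\le\; f(x) \;\le\; \gamma DP\,\mathbf{m}(x) - Dx =: \overline f(x).
\]
Both $\overline f$ and $\underline f$ are of the pure switched-linear form $A_{\psi(\pi)}x$ of~\cref{lem:linear-system-stability}, with state-feedback switching policies $\pi^{\max}_x(s) = \argmax_a x_a(s)$ and $\pi^{\min}_x(s) = \argmin_a x_a(s)$, respectively. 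Since~\cref{lem:linear-system-stability} establishes global asymptotic stability under \emph{arbitrary} switching (in particular, any state-feedback policy), the trajectories $\overline x_t$ and $\underline x_t$ of $\dot{\overline x}_t = \overline f(\overline x_t)$ and $\dot{\underline x}_t = \underline f(\underline x_t)$ converge to the origin from any initial condition. Global Lipschitzness of $\overline f$ and $\underline f$, needed to apply~\cref{lemma:comparision-principle}, follows exactly as in~\cref{prop:Lipschitz-f}.

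Finally, I would apply~\cref{lemma:comparision-principle} twice. Fixing any $\varepsilon > 0$ and taking $\overline x_0 = x_0 + \varepsilon \mathbf 1$, $\underline x_0 = x_0 - \varepsilon \mathbf 1$, one invocation (with $\overline f$ as the quasi-monotone upper dynamics and $f$ as the lower dynamics, using $f \le \overline f$) yields $x_t \le \overline x_t$; a second invocation (with $f$ as the quasi-monotone upper dynamics and $\underline f$ as the lower dynamics, using $\underline f \le f$) yields $\underline x_t \le x_t$. The sandwich $\underline x_t \le x_t \le \overline x_t$ combined with $\overline x_t, \underline x_t \to 0$ forces $x_t \to 0$. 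Uniqueness of the origin as equilibrium is immediate: $f(x)=0$ together with nonsingularity of $D$ reduces to the Bellman optimality equation for $Q = Q^\ast + x$, whose only solution is $Q = Q^\ast$. The main obstacle is the careful quasi-monotonicity check and the choice of max/min bounds that simultaneously (i) sandwich $f$ by purely linear dynamics and (ii) produce comparison systems that fall inside the stable matrix family $\{A_\sigma\}$ of~\cref{lem:linear-system-stability}; once this decomposition is in hand, everything else is a routine application of~\cref{lemma:comparision-principle} and~\cref{lem:linear-system-stability}.
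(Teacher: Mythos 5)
Your proposal is correct and follows the same overall architecture as the paper's proof: sandwich the affine switched system between two systems drawn from the stable matrix family of \cref{lem:linear-system-stability}, verify quasi-monotonicity and Lipschitz continuity, and apply \cref{lemma:comparision-principle} twice. The upper comparison system you construct, $\dot{\overline x}_t = (\gamma DP\Pi_{\pi_{\overline x_t}}-D)\overline x_t$, is exactly the paper's (your subadditivity bound $\mathbf{m}(Q^*+x)-\mathbf{m}(Q^*)\le\mathbf{m}(x)$ is equivalent to the paper's two-step estimate that first drops the nonpositive affine term and then replaces $\Pi_{\pi_{Q_t}}$ by $\Pi_{\pi_{Q_t-Q^*}}$). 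Where you genuinely diverge is the lower bound: you use the superadditivity estimate $\mathbf{m}(Q^*+x)-\mathbf{m}(Q^*)\ge\mathbf{m}^-(x)$ to obtain a \emph{min-policy switched} lower comparison system $(\gamma DP\Pi_{\pi^{\min}_x}-D)x$, whereas the paper uses the single inequality $\gamma DP\Pi_{\pi_{Q_t}}Q_t\ge\gamma DP\Pi_{\pi_{Q^*}}Q_t$ to obtain a \emph{fixed linear} lower comparison system $(\gamma DP\Pi_{\pi_{Q^*}}-D)(Q_t^l-Q^*)$. Both lower systems lie in the family covered by \cref{lem:linear-system-stability} (a fixed policy is a special case of arbitrary switching), so both are valid; your version is more symmetric and requires explicitly checking that $f$ itself is quasi-monotone increasing for the second application of the comparison principle (which you do correctly, and which the paper also effectively does by verifying quasi-monotonicity of $(\gamma DP\Pi_{\pi_y}-D)y+DR$ in the $Q$-coordinates), while the paper's fixed linear lower system is marginally simpler since its stability needs no switching argument at all. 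Your closing remark on uniqueness of the equilibrium via the Bellman optimality equation is a correct and slightly more explicit justification than the paper provides.
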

\begin{proof}
The basic idea of the proof is to find systems whose trajectories lower and upper bounds the trajectory of~\eqref{eq:swithcing-system-form} by the vector comparison principle. Then, by proving the asymptotic stability of the two comparison systems, we can prove the asymptotic stability of~\eqref{eq:swithcing-system-form}.

Since each element of $\Pi_{\pi_{Q^*}}Q^*$ takes the maximum value across $a$, it is clear that $(\Pi_{\pi_{Q_t}}-\Pi_{\pi_{Q^*}})Q^*\le 0$ holds, where the inequality is element-wise. Moreover, since $\gamma DP$ has nonnegative elements, $\gamma DP(\Pi_{\pi_{Q_t}}-\Pi_{\pi_{Q^*}})Q^*\le 0$ holds. Therefore, we have $(\gamma D_{\beta} P\Pi_{\pi_{Q_t}}-D)(Q_t-Q^*)+\gamma D P(\Pi_{\pi_{Q_t}}-\Pi_{\pi_{Q^*}})Q^*\le (\gamma D P\Pi_{\pi_{Q_t}}-D)(Q_t-Q^*)\le (\gamma D P\Pi_{\pi_{Q_t-Q^*}}-D)(Q_t-Q^*)$ for all $t\in {\mathbb R}_+$. To proceed, define the vector functions
\begin{align*}
\overline{f}(y)=&(\gamma D P\Pi_{\pi_{y}}-D)y,\\
\underline{f}(z)=& (\gamma D P\Pi_{\pi_{z+Q^*}}-D)z+\gamma D P(\Pi_{\pi_{z+Q*}}-\Pi_{\pi_{Q^*}})Q^*,
\end{align*}
and consider the systems
\begin{align*}
\frac{d}{dt}y_t = \overline{f}(y_t),\quad y_0 > Q_0-Q^*,\\
\frac{d}{dt}z_t = \underline{f}(z_t),\quad z_0 = Q_0-Q^*,
\end{align*}
for all $t \geq 0$. To apply~\cref{lemma:comparision-principle}, we will prove that $\overline{f}$ is quasi-monotone increasing. For any $z \in {\mathbb R}^{|{\cal S}||{\cal A}|}$, consider a nonnegative vector $p \in {\mathbb R}^{|{\cal S}||{\cal A}|}$ such that its $i$th element is zero. Then, for any $i \in {\cal S}$, we have
\begin{align*}
e_i^T \overline{f}(z+p) =& e_i^T (\gamma DP\Pi _{z + p}  - D)(z + p)\\
=&\gamma e_i^T DP\Pi_{z+p}(z+p)-e_i^T Dz - e_i^T Dp\\
=&\gamma e_i^T DP\Pi_{z+p}(z+p)-e_i^T Dz\\
=&\gamma e_i^T DP \begin{bmatrix}
   \max_a (z_a(1)+p_a(1))\\
   \max_a (z_a(2)+p_a(2))\\
    \vdots \\
   \max_a (z_a(|{\cal S}|)+p_a(|{\cal S}|))\\
\end{bmatrix} -e_i^T Dz\\
\ge&\gamma e_i^T DP\begin{bmatrix}
   \max_a z_a(1)\\
   \max_a z_a(2)\\
    \vdots \\
   \max_a z_a(|S|)\\
\end{bmatrix} - e_i^T Dz\\
=& e_i^T \overline{f}(z),
\end{align*}
which proves the quasi-monotone increasing property, where the second line is due to $e_i^T Dp = 0$. Moreover, following similar lines of the proof of~\cref{prop:Lipschitz-f}, one can prove that $\overline{f}$ is Lipschitz continuous. Using $\underline{f}(z) =(\gamma DP\Pi_{\pi_{(z+Q^*)}}-D)(z+Q^*)+DR$ and following similar lines of the proof of~\cref{prop:Lipschitz-f}, we conclude that $\underline{f}$ is Lipschitz continuous as well. Now, by~\cref{lemma:comparision-principle}, $Q_t-Q^*\le Q_t^u-Q^*$ holds for every $t\in {\mathbb R}_+$, where $Q_t^u-Q^*$ is the solution of the switching system, which we refer to as an upper comparison system
\begin{align*}
\frac{d}{dt}(Q_t^u-Q^*)=(\gamma D P\Pi_{\pi_{Q_t^u}}-D)(Q_t^u-Q^*),\quad Q_0^u-Q^*> Q_0-Q^*\in {\mathbb R}^{|{\cal S}||{\cal A}|},
\end{align*}
By ~\cref{lem:linear-system-stability}, the origin of the above switching system is globally asymptotically stable even under arbitrary switchings. Therefore, $Q_t-Q^*$ is asymptotically upper bounded by the zero vector as $t\to\infty$.

On the other hand, we have
\begin{align*}
(\gamma DP\Pi_{\pi_{Q_t}}-D)(Q_t-Q^*)&+\gamma D P(\Pi_{\pi_{Q_t}}-\Pi_{\pi_{Q^*}})Q^* = (\gamma D P\Pi_{\pi_{Q_t}}-D )Q_t+ D R\\
\ge& (\gamma D P\Pi_{\pi_{Q^*}}-D)Q_t + D R
=(\gamma D P\Pi_{\pi_{Q^*}}-D)(Q_t-Q^*),
\end{align*}
where the first inequality is due to $\gamma D P\Pi_{\pi_{Q_t}} Q_t \ge\gamma D P\Pi_{\pi_{Q^*}} Q_t$, and the second equality uses $D Q^* =\gamma D P\Pi_{\pi_{Q^*}}Q^*+DR$. Again, define the vector functions for lower comparison parts
\begin{align*}
\overline{f}(y)=&(\gamma D P\Pi_{\pi_y}-D )y+ DR,\\
\underline{f}(z)=& (\gamma D P\Pi_{\pi_{Q^*}}-D)z + DR
\end{align*}
and consider the systems
\begin{align*}
\frac{d}{dt}y_t = \overline{f}(y_t),\quad y_0 = Q_0,\\
\frac{d}{dt}z_t = \underline{f}(z_t),\quad z_0 < Q_0,
\end{align*}
for all $t \geq 0$. To apply~\cref{lemma:comparision-principle}, we can prove that $\overline{f}$ is quasi-monotone increasing following the same lines as above. $\overline{f}$ is Lipschitz continuous by~\cref{prop:Lipschitz-f} and $\underline{f}$ is Lipschitz continuous as it is linear. Therefore, we can invoke~\cref{lemma:comparision-principle}, to prove the inequality $Q_t^l-Q^*\le Q_t-Q^*$ for all $t\geq 0$, where $Q_t^l-Q^*$ is the solution of the following linear system called the lower comparison system:
\begin{align*}
\frac{d}{dt}(Q_t^l-Q^*)=(\gamma D P\Pi_{Q^*}-D)(Q_t^l-Q^*),\quad Q_0^l-Q^*<Q_0-Q^* \in {\mathbb R}^{|{\cal S}||{\cal A}|},
\end{align*}

The origin of the above linear system is globally asymptotically stable equilibrium point by~\cref{lem:linear-system-stability}. Therefore, $Q_t-Q^*$ is asymptotically lower bounded by the zero vector as $t\to\infty$. Combining the bounds, we conclude that $Q_t-Q^*\to 0$ as $t\to\infty$. This completes the proof of~\cref{thm:swithcing-system-stability}.
\end{proof}

\subsection{Proof of Q-learning Convergence}\label{subsec:proof-of-Q-learning}
Based on the results, we can now apply the Borkar and Meyn theorem,~\cref{lemma:Borkar}, to prove~\cref{thm:Q-learning-convergence}. The convergence proof of Q-learning in~\cite{borkar2000ode} relies on a nonlinear ODE model, whose asymptotic stability is proved in~\cite{borkar1997analog} by using the max-norm contraction of the Bellman operator. The switching system framework in the previous section provides a simpler analysis and can be easily extended to deal with many Q-learning variants, as are given in the subsequent sections.
\begin{proof}[Proof of~\cref{thm:Q-learning-convergence}]
First of all, note that the affine switching system model in~\eqref{eq:swithcing-system-form} corresponds to the ODE model, $\frac{d}{dt}x_t = f(x_t)$, that appears in~\cref{assumption:1}. The proof is completed by examining all the statements in~\cref{assumption:1}:
\begin{enumerate}
\item Q-learning in~\eqref{eq:Q-learning-stochastic-recursion-form} can be expressed as the stochastic recursion in~\eqref{eq:general-stochastic-recursion} with
\begin{align*}
f(\theta)=(\gamma D P\Pi_{\pi_\theta}-D)\theta +\gamma D P(\Pi_{\pi_\theta}-\Pi_{\pi_{Q^*}})Q^*.
\end{align*}

To prove the first statement of~\cref{assumption:1}, we note that
\begin{align*}
\frac{f(c\theta)}{c}
=&(\gamma DP\Pi_{\pi_\theta}-D)\theta +\frac{\gamma DP(\Pi_{\pi_\theta}-\Pi_{\pi_{Q^*}})Q^*}{c},
\end{align*}
where the last equality is due to the homogeneity of the policy, $\pi_{c\theta}(s)=\argmax_{a\in {\cal A}} e_s^T c\theta_a=\argmax_{a\in {\cal A}}e_s^T \theta_a$. By taking the limit, we have
\begin{align*}
\lim_{c \to\infty}\frac{f(c\theta)}{c}&=(\gamma DP\Pi_{\pi_\theta}-D )\theta+ \lim_{c\to\infty} \frac{\gamma D P(\Pi_{\pi_\theta}-\Pi_{\pi_{Q^*}})Q^*}{c}\\
&=(\gamma D P\Pi_{\pi_\theta}-D)\theta =f_\infty(\theta).
\end{align*}
Moreover, $f$ is globally Lipschitz continuous according to~\cref{prop:Lipschitz-f}. Therefore, the proof is completed.

\item The second statement of~\cref{assumption:1} follows from~\cref{lem:linear-system-stability}..

\item The third statement of~\cref{assumption:1} follows from~\cref{thm:swithcing-system-stability}.

\item Next, we prove the remaining parts. Recall that the Q-learning update is expressed as
\begin{align*}
&Q_{k+1}=Q_k+\alpha_k (f(Q_k)+\varepsilon_{k+1})
\end{align*}
with the stochastic error
\begin{align*}
\varepsilon_{k+1}=&(e_a\otimes e_s )(e_a\otimes e_s)^T R+\gamma (e_a\otimes e_s )(e_{s'})^T \Pi_{\pi_{Q_k}}Q_k\\
&-(e_a\otimes e_s)(e_a\otimes e_s)^T Q_k-(DR+\gamma DP\Pi_{\pi_{Q_k}}Q_k-DQ_k)
\end{align*}
and
\begin{align*}
f(Q)=(\gamma D P\Pi_{\pi_Q}-D)Q+\gamma D P(\Pi_{\pi_Q}-\Pi_{\pi_{Q^*}})Q^*.
\end{align*}

Define the history  ${\cal G}_k:=(\varepsilon_k,\varepsilon_{k-1},\ldots,\varepsilon_1,Q_k,Q_{k-1},\ldots,Q_0)$, and
the process $(M_k)_{k=0}^\infty$ with $M_k:=\sum_{i=1}^k {\varepsilon_i}$. Then, we can prove that $(M_k)_{k=0}^\infty$ is Martingale. To do so, we first prove ${\mathbb E}[\varepsilon_{k+1}|{\cal G}_k]=0$ by
\begin{align*}
{\mathbb E}[\varepsilon_{k+1}|{\cal G}_k]=&{\mathbb E}[(e_a\otimes e_s)(e_a\otimes e_s)^T R|{\cal G}_k]+{\mathbb E}[\gamma (e_a\otimes e_s)(e_{s'})^T \Pi_{\pi_{Q_k}} Q_k|{\cal G}_k]\\
&-{\mathbb E}[(e_a\otimes e_s)(e_a\otimes e_s)^T Q_k|{\cal G}_k]-{\mathbb E}[DR + \gamma DP\Pi_{\pi_{Q_k}}Q_k-DQ_k|{\cal G}_k]\\
=&{\mathbb E}[DR+\gamma DP\Pi_{\pi_{Q_k}}Q_k-DQ_k|{\cal G}_k]-{\mathbb E}[DR+\gamma DP\Pi_{\pi_{Q_k}}Q_k -DQ_k|{\cal G}_k]\\
=&0,
\end{align*}
where the second equality is due to the i.i.d. assumption of samples. Using this identity, we have
\begin{align*}
{\mathbb E}[M_{k+1}|{\cal G}_k]=& {\mathbb E}\left[ \left. \sum_{i=1}^{k+1}{\varepsilon_i} \right|{\cal G}_k\right]={\mathbb E}[\varepsilon_{k+1}|{\cal G}_k]+{\mathbb E}\left[ \left. \sum_{i=1}^k {\varepsilon_i} \right|{\cal G}_k \right]\\
=&{\mathbb E}\left[\left.\sum_{i=1}^k{\varepsilon_i} \right|{\cal G}_k \right]=\sum_{i=1}^k {\varepsilon_i}=M_k.
\end{align*}
Therefore, $(M_k)_{k=0}^\infty$ is a Martingale sequence, and $\varepsilon_{k+1} = M_{k+1}-M_k$ is a Martingale difference. Moreover, it can be easily proved that the fourth condition of~\cref{assumption:1} is satisfied by algebraic calculations. Therefore, the fourth condition is met.
\end{enumerate}
\end{proof}

\section{Numerical examples demonstrating~\cref{thm:swithcing-system-stability}}

Consider an MDP with ${\cal S}=\{1,2\}$, ${\cal A}=\{1,2\}$, $\gamma = 0.9$,
\begin{align*}
&P_1=\begin{bmatrix}
   0.2 & 0.8\\
   0.3 & 0.7 \\
\end{bmatrix},\quad P_2=\begin{bmatrix}
   0.5 & 0.5 \\
   0.7 & 0.3\\
\end{bmatrix},
\end{align*}
a behavior policy $\beta$ such that
\begin{align*}
&{\mathbb P}[a = 1|s = 1] = 0.2,\quad {\mathbb P}[a = 2|s = 1] = 0.8,\\
&{\mathbb P}[a = 1|s = 2] = 0.7,\quad {\mathbb P}[a = 2|s = 2] = 0.3,
\end{align*}
and
\begin{align*}
R_1  = \begin{bmatrix}
   3  \\
   1  \\
\end{bmatrix},\quad R_2  = \begin{bmatrix}
   2  \\
   1  \\
\end{bmatrix}
\end{align*}

Simulated trajectories of the O.D.E. model of Q-learning including the upper and lower comparison systems are depicted in~\cref{fig:appendix:1}.
\begin{figure}[t]
\centering\includegraphics[width=16cm,height=12cm]{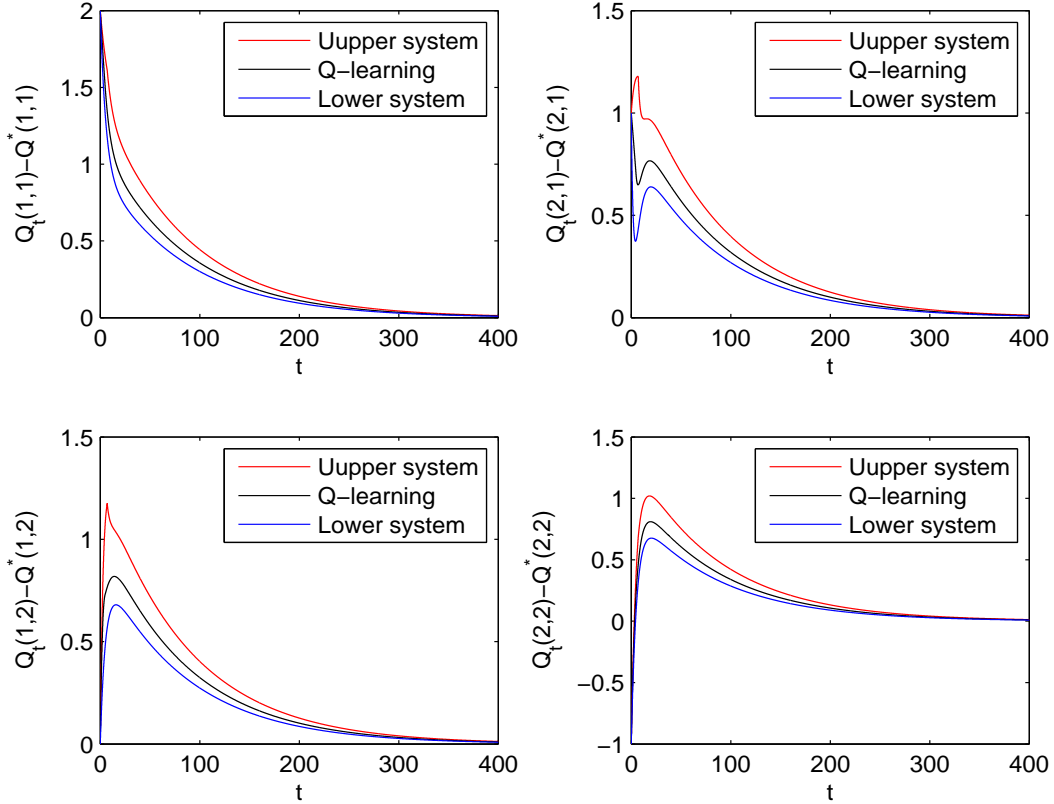}
\caption{Trajectories of the original O.D.E. model of Q-learning and the corresponding upper and lower comparison systems}\label{fig:appendix:1}
\end{figure}

The simulation study empirically proves that the O.D.E. model associated with the Q-learning is asymptotically stable. Moreover, we compare the trajectory of the O.D.E. model (black line)
with the upper and lower comparison systems (red and blue lines, respectively), and this result shows that the bounding rules that we predicted theoretically hold.

\section{Convergence of Averaging Q-learning}

In this section, we study convergence of the so0called averaging Q-learning. Averaging Q-learning is a newly introduced class of Q-learning motivated by the averaging TD-learning in~\cite{lee2019target}. The averaging TD-learning algorithm is a variation of standard TD-learning~\citep{sutton1988learning} which provides a trade-off between convergence speed and variation of iterations. The averaging Q-learning maintains two separate estimates for the Q-function, the target estimate and the online estimate, respectively, and the target estimate update follows the Polyak's averaging of the current online and target variables to improve the stability. We provide new O.D.E analysis of both algorithms to demonstrate potential utility of the switching system models. Especially, the analysis for averaging Q-learning is new in the literature. We first introduce the averaging Q-learning, that naturally extends the averaging TD-learning in~\cite{lee2019target} to Q-learning. The full algorithm is described in~\cref{algo:averaging-Q-learning}.

\begin{algorithm}[t]
\caption{Averaging Q-Learning}
  \begin{algorithmic}[1]
    \State Initialize $Q_0^A$ and $Q_0^B$ randomly.
    \For{iteration $k=0,1,\ldots$}
    	\State Sample $(s,a)$
        \State Sample $s'\sim P_a(s,\cdot)$ and $r_a(s,s')$
        \State Update $Q^A_{k+1}(s,a)=Q^A_k(s,a)+\alpha_k \{r_a(s,s')+\gamma\max_{a\in {\cal A}} Q^B_k(s',a)-Q^A_k(s,a)\}$
        \State Update $Q^B_{k+1}=Q^B_k+\alpha_k\delta(Q^A_k-Q^B_k)$
    \EndFor
  \end{algorithmic}\label{algo:averaging-Q-learning}
\end{algorithm}

We now analyze the convergence of the averaging Q-learning algorithm based on the same switching system approach. Following similar lines of the standard Q-learning, the corresponding switching system-based ODE model is given by

\begin{align*}
\frac{d}{dt}\begin{bmatrix}
   Q_t^A\\
   Q_t^B\\
\end{bmatrix}=\begin{bmatrix}
   -D & \gamma DP\Pi_{\pi_{Q_t^B}}\\
   \delta I & -\delta I\\
\end{bmatrix} \begin{bmatrix}
   Q_t^A\\
   Q_t^B\\
\end{bmatrix}+\begin{bmatrix}
   DR\\
   0\\
\end{bmatrix},\quad \begin{bmatrix}
   Q_0^A\\
   Q_0^B\\
\end{bmatrix}\in {\mathbb R}^{2|{\cal S}||{\cal A}|},
\end{align*}
or equivalently,
\begin{align}
\frac{d}{dt}\begin{bmatrix}
   Q_t^A-Q^*\\
   Q_t^B-Q^*\\
\end{bmatrix}=& \begin{bmatrix}
   -D & \gamma D P\Pi_{\pi_{Q_t^B}}\\
   \delta I & -\delta I\\
\end{bmatrix} \begin{bmatrix}
   Q_t^A - Q^*\\
   Q_t^B - Q^*\\
\end{bmatrix}+\begin{bmatrix}
   \gamma DP(\Pi_{\pi_{Q_t^B}}-\Pi_{\pi_{Q^*}})Q^*\\
   0 \\
\end{bmatrix},\nonumber\\
&\begin{bmatrix}
   Q_0^A-Q^*\\
   Q_0^B-Q^*\\
\end{bmatrix}=z \in {\mathbb R}^{2|{\cal S}||{\cal A}|}, \label{eq:switching-system-model-for-AQ-learning}
\end{align}
which matches with the switching system form in~\eqref{eq:swithcing-system-form}. We first establish the global asymptotic stability of~\eqref{eq:switching-system-model-for-AQ-learning}.

\begin{theorem}\label{thm:averaging-Q-learning-stability}
For any $\delta>0$, the origin is the unique globally asymptotically stable equilibrium point of the affine switching system~\eqref{eq:switching-system-model-for-AQ-learning}.
\end{theorem}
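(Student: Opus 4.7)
The plan is to mirror the two-stage argument of~\cref{thm:swithcing-system-stability}. First I would establish global asymptotic stability of the associated \emph{linear} switching system obtained by dropping the affine term in~\eqref{eq:switching-system-model-for-AQ-learning}, and then apply the vector comparison principle (\cref{lemma:comparision-principle}) twice to sandwich the trajectory of the full affine system between upper and lower comparison systems, each of which reduces to a linear switching system covered by the first step.

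For the linear step, I would apply~\cref{lemma:fundamental-stability-lemma} to the subsystem matrices
\begin{equation*}
A_\sigma = \begin{bmatrix} -D & \gamma DP\Pi_\sigma \\ \delta I & -\delta I \end{bmatrix}.
\end{equation*}
The naive choice $L = I$ used in~\cref{lem:linear-system-stability} fails here: in the bottom block the diagonal $-\delta$ is exactly cancelled by the $\delta$-coupling to $y^A$, giving row-sum $0$ and violating strict diagonal dominance. My fix is to rescale the two blocks asymmetrically: take $L = \text{diag}(I,\, cI)$ with $c \in (\gamma, 1)$, which is nonempty because $\gamma<1$. A direct computation gives $\bar A_\sigma = L A_\sigma L^{-1} = \begin{bmatrix} -D & (\gamma/c)\, DP\Pi_\sigma \\ c\delta I & -\delta I \end{bmatrix}$ with $L A_\sigma = \bar A_\sigma L$, and the top and bottom row-sums become $[D]_{ii}(\gamma/c - 1) < 0$ and $\delta(c - 1) < 0$ respectively, both strict and uniform in $\sigma$.

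For the comparison systems, write $y^A = Q^A - Q^*$ and $y^B = Q^B - Q^*$. For the upper bound, since $\max_a(y^B_a + Q^*_a) \le \max_a y^B_a + \max_a Q^*_a$ componentwise, the original field is majorized by
\begin{equation*}
\overline{f}(y^A, y^B) = \begin{bmatrix} -D y^A + \gamma DP\Pi_{\pi_{y^B}} y^B \\ \delta y^A - \delta y^B \end{bmatrix},
\end{equation*}
whose top block depends on $y^B$ only through the componentwise max $\max_b y^B_b(\cdot)$. This $\overline f$ is quasi-monotone and Lipschitz, and its ODE is a linear switching system of the form analyzed in Step~1, hence globally asymptotically stable under arbitrary switching. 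For the lower bound, I would use the original field $f$ itself as the quasi-monotone majorant; a direct coordinate check (analogous to the top-block calculation in~\cref{thm:swithcing-system-stability} and using the Bellman equation) shows $f$ is quasi-monotone---the top block depends on $y^B$ only through $\max_a(y^B_a+Q^*_a)$, and the bottom block is linear with nonnegative $\delta$-coupling to $y^A$. Using $\gamma DP\Pi_{\pi_{Q^B}}Q^B \ge \gamma DP\Pi_{\pi_{Q^*}}Q^B$ together with the Bellman equation, $f$ is bounded below by the \emph{linear} (single-mode) field $\underline{f}(y^A, y^B) = (-Dy^A + \gamma DP\Pi_{\pi_{Q^*}} y^B,\, \delta y^A - \delta y^B)$, again a special case of Step~1 and thus asymptotically stable. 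Two applications of~\cref{lemma:comparision-principle} then sandwich $(y^A_t, y^B_t)$ between trajectories tending to $0$, yielding the claim.

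I expect Step~1 to be the main obstacle. Unlike~\cref{lem:linear-system-stability}, one cannot take $L = I$ because the averaging dynamics introduce a perfect cancellation in the bottom-block row-sums; the asymmetric scaling $c \in (\gamma, 1)$ is precisely what consumes the $1 - \gamma$ contraction margin of the Bellman operator to absorb the averaging coupling. A secondary subtlety is verifying quasi-monotonicity of the coupled field---the switching $\Pi_{\pi_{Q^B}}$ in the $Q^A$-dynamics depends on the \emph{other} block of the state vector, but the max-operator representation makes this transparent.
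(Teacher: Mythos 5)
Your proposal is correct and follows essentially the same route as the paper: it sandwiches the affine switching system between upper and lower comparison systems via \cref{lemma:comparision-principle} (with the same quasi-monotonicity checks and the same single-mode lower system built from $\Pi_{\pi_{Q^*}}$), and establishes stability of the linear comparison systems by applying \cref{lemma:fundamental-stability-lemma} with a block-diagonal scaling $L=\mathrm{diag}(I,\,cI)$. The paper makes the specific choice $c=\gamma^{1/2}\in(\gamma,1)$, so your observation that $L=I$ fails due to the zero row-sum in the bottom block and that any $c\in(\gamma,1)$ restores strict diagonal dominance is precisely the key step the paper uses.
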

\begin{proof}
Using $\gamma DP(\Pi_{\pi_{Q_t^B}}-\Pi_{\pi_{Q^*}})Q^*\leq 0$, we obtain
\begin{align*}
\begin{bmatrix} -D & \gamma DP\Pi_{\pi_{Q_t^B}}\\
   \delta I & -\delta I\\
\end{bmatrix}\begin{bmatrix}
   Q_t^A-Q^*\\
   Q_t^B-Q^*\\
\end{bmatrix}&+\begin{bmatrix}
   \gamma DP(\Pi_{\pi_{Q_t^B}}-\Pi_{\pi_{Q^*}})Q^*\\
   0 \\
\end{bmatrix}\\
&\le\begin{bmatrix}
   - D &\gamma DP\Pi_{\pi_{Q_t^B}}\\
   \delta I &-\delta I\\
\end{bmatrix} \begin{bmatrix}
   Q_t^A-Q^*\\
   Q_t^B-Q^*\\
\end{bmatrix}\\
&\le\begin{bmatrix}
   - D &\gamma DP\Pi_{\pi_{Q_t^B-Q^*}}\\
   \delta I &-\delta I\\
\end{bmatrix} \begin{bmatrix}
   Q_t^A-Q^*\\
   Q_t^B-Q^*\\
\end{bmatrix}.
\end{align*}

Consider the upper comparison system
\begin{align*}
&\frac{d}{dt}\begin{bmatrix}
   Q_t^{A,u}-Q^*\\
   Q_t^{B,u}-Q^*\\
\end{bmatrix}=\begin{bmatrix}
   - D & \gamma DP\Pi_{\pi_{Q_t^{B,u}-Q^*}}\\
   \delta I & -\delta I\\
\end{bmatrix} \begin{bmatrix}
   Q_t^{A,u}-Q^*\\
   Q_t^{B,u}-Q^*\\
\end{bmatrix},\;\begin{bmatrix}
   Q_0^{A,u}-Q^*\\
   Q_0^{B,u}-Q^*\\
\end{bmatrix}> \begin{bmatrix}
   Q_0^A-Q^*\\
   Q_0^B-Q^*\\
\end{bmatrix}\in {\mathbb R}^{2|{\cal S}||{\cal A}|},
\end{align*}
and define the vector functions
\begin{align*}
\overline{f}(y_1,y_2):=&\begin{bmatrix}
   \overline{f}_1(y_1,y_2)\\
   \overline{f}_2(y_1,y_2)\\
\end{bmatrix}:=\begin{bmatrix}
   -D & \gamma DP\Pi_{\pi_{y_2}}\\
   \delta I & -\delta I\\
\end{bmatrix} \begin{bmatrix}
   y_1\\
   y_2\\
\end{bmatrix}\\
\underline{f}(y_1,y_2):=&\begin{bmatrix}
   \underline{f}_1(z_1,z_2)\\
   \underline{f}_2(z_1,z_2)\\
\end{bmatrix}:=\begin{bmatrix}
   -D & \gamma DP\Pi_{\pi_{z_2+ Q^*}}\\
   \delta I & -\delta I\\
\end{bmatrix} \begin{bmatrix}
   z_1\\
   z_2\\
\end{bmatrix}+\begin{bmatrix}
   \gamma DP(\Pi_{\pi_{z_2+Q^*}}-\Pi_{\pi_{Q^*}})Q^*\\
   0\\
\end{bmatrix},
\end{align*}
and consider the systems
\begin{align*}
\frac{d}{dt}\begin{bmatrix}
   y_{t,1}\\
   y_{t,2}\\
\end{bmatrix}=\begin{bmatrix}
   \overline{f}_1(y_{t,1},y_{t,2})\\
   \overline{f}_2(y_{t,1},y_{t,2})\\
\end{bmatrix},\quad y_0 > \begin{bmatrix}
   Q_0^A-Q^*\\
   Q_0^B-Q^*\\
\end{bmatrix},\\
\frac{d}{dt}\begin{bmatrix}
   z_{t,1}\\
   z_{t,2}\\
\end{bmatrix}=\begin{bmatrix}
   \underline{f}_1(z_{t,1},z_{t,2})\\
   \underline{f}_2(z_{t,1},z_{t,2})\\
\end{bmatrix},\quad z_0 = \begin{bmatrix}
   Q_0^A-Q^*\\
   Q_0^B-Q^*\\
\end{bmatrix},
\end{align*}
for all $t \geq 0$. We first prove that $\overline{f}$ is quasi-monotone increasing. We will check the condition of the quasi-monotone increasing function for $\overline{f}_1$ and $\overline{f}_2$, separately. Assume that $p_1\in {\mathbb R}^{|{\cal S}|||{\cal A}|}$ and $p_2\in {\mathbb R}^{|{\cal S}|||{\cal A}|}$ are nonnegative vectors, and an $i$the element of $p_1$ is zero. For $\overline{f}_1$, we have
\begin{align*}
e_i^T \overline{f}_1(y_1+p_1,y_2+p_2)=&-e_i^T D(y_1+p_1)+\gamma e_i^T DP\Pi_{\pi_{(y_2+p_2)}}(y_2+p_2)\\
=&-e_i^T Dy_1+\gamma e_i^T DP\Pi_{\pi_{(y_2+p_2)}}(y_2+p_2)\\
\ge& -e_i^T Dy_1+\gamma e_i^T DP\Pi_{\pi_{y_2}} y_2\\
=& e_i^T \overline{f}_1(y_1,y_2),
\end{align*}
where the second line is due to $-e_i^T D p_1 = 0$. Similarly, assuming that $p_1\in {\mathbb R}^{|{\cal S}|||{\cal A}|}$ and $p_2\in {\mathbb R}^{|{\cal S}|||{\cal A}|}$ are nonnegative vectors, and an $i$the element of $p_2$ is zero, we get
\begin{align*}
e_i^T \overline{f}_2(y_1+p_1,y_2+p_2)=&\delta e_i^T(y_1+p_1)-\gamma\delta e_i^T (y_2+p_2)\\
=&\delta e_i^T(y_1+p_1)-\gamma\delta e_i^T y_2\\
\ge&\delta e_i^T y_1-\gamma\delta e_i^T y_2\\
=& e_i^T \overline{f}_2(y_1,y_2),
\end{align*}
where the second line is due to $e_i^T p_2=0$. Therefore, $\overline{f}$ is quasi-monotone increasing. The Lipschitz continuity of $\overline{f}$ and $\underline{f}$ can be easily proved. Therefore, by~\cref{lemma:comparision-principle}, $\begin{bmatrix}
   Q_t^A-Q^*\\
   Q_t^B-Q^*\\
\end{bmatrix}\le\begin{bmatrix}
   Q_t^{A,u}-Q^*\\
   Q_t^{B,u}-Q^*\\
\end{bmatrix}$ holds for all $t\geq 0$, where $\begin{bmatrix}
   Q_t^{A,u}-Q^*\\
   Q_t^{B,u}-Q^*\\
\end{bmatrix}$ is the solution of the upper comparison system.

Moreover, using the inequality $\gamma D P\Pi_{\pi_{Q_t^B}} Q_t^B \ge\gamma DP\Pi_{\pi_{Q^*}} Q_t^B$, we obtain
\begin{align*}
\frac{d}{dt}\begin{bmatrix}
   Q_t^A\\
   Q_t^B\\
\end{bmatrix}=\begin{bmatrix}
   - D & \gamma DP\Pi_{\pi_{Q_t^B}}\\
   \delta I & -\delta I\\
\end{bmatrix}\begin{bmatrix}
   Q_t^A\\
   Q_t^B\\
\end{bmatrix} + \begin{bmatrix}
   DR\\
   0\\
\end{bmatrix}\ge \begin{bmatrix}
   -D & \gamma DP\Pi_{\pi_{Q^*}}\\
   \delta I & -\delta I\\
\end{bmatrix} \begin{bmatrix}
   Q_t^A\\
   Q_t^B\\
\end{bmatrix}+\begin{bmatrix}
   DR\\
   0\\
\end{bmatrix}.
\end{align*}
Using this relation, consider the lower comparison system
\begin{align*}
\frac{d}{dt}\begin{bmatrix}
   Q_t^{A,l}-Q^*\\
   Q_t^{B,l}-Q^*\\
\end{bmatrix}=\begin{bmatrix}
   -D & \gamma DP\Pi_{\pi_{Q^*}}\\
   \delta I & -\delta I\\
\end{bmatrix}\begin{bmatrix}
   Q_t^{A,l}-Q^*\\
   Q_t^{B,l}-Q^*\\
\end{bmatrix},\quad \begin{bmatrix}
   Q_0^{A,l}-Q^*\\
   Q_0^{B,l}-Q^*\\
\end{bmatrix}< \begin{bmatrix}
   Q_0^A-Q^*\\
   Q_0^B-Q^*\\
\end{bmatrix}\in {\mathbb R}^{2|{\cal S}||{\cal A}|},
\end{align*}
or equivalently,
\begin{align*}
\frac{d}{dt} \begin{bmatrix}
   Q_t^{A,l}\\
   Q_t^{B,l}\\
\end{bmatrix}=\begin{bmatrix}
   -D & \gamma DP\Pi_{\pi_{Q^*}}\\
   \delta I & -\delta I\\
\end{bmatrix}\begin{bmatrix}
   Q_t^{A,l}\\
   Q_t^{B,l}\\
\end{bmatrix}+\begin{bmatrix}
   DR\\
   0\\
\end{bmatrix}.
\end{align*}

To proceed, define the vector functions
\begin{align*}
\overline{f}(y_1,y_2):=&\begin{bmatrix}
   \overline{f}_1(y_1,y_2)\\
   \overline{f}_2(y_1,y_2)\\
\end{bmatrix}=\begin{bmatrix}
   \gamma DP\Pi_{\pi_{y_2}} & -D\\
   \delta I & -\delta I\\
\end{bmatrix} \begin{bmatrix}
   y_1\\
   y_2\\
\end{bmatrix}+ \begin{bmatrix}
   DR\\
   0\\
\end{bmatrix}\\
\underline{f}(z_1,z_2):=&\begin{bmatrix}
   \underline{f}_1(z_1,z_2)\\
   \underline{f}_2(z_1,z_2)\\
\end{bmatrix}=\begin{bmatrix}
   -D & \gamma DP\Pi_{\pi_{Q^*}}\\
   \delta I & -\delta I\\
\end{bmatrix} \begin{bmatrix}
   z_1\\
   z_2\\
\end{bmatrix}+\begin{bmatrix}
   DR\\
   0\\
\end{bmatrix},
\end{align*}
and consider the systems
\begin{align*}
\frac{d}{dt}\begin{bmatrix}
   y_{t,1}\\
   y_{t,2}\\
\end{bmatrix}=\begin{bmatrix}
   \overline{f}_1(y_{t,1},y_{t,2})\\
   \overline{f}_2(y_{t,1},y_{t,2})\\
\end{bmatrix},\quad y_0 = \begin{bmatrix}
   Q_0^A-Q^*\\
   Q_0^B-Q^*\\
\end{bmatrix},\\
\frac{d}{dt}\begin{bmatrix}
   z_{t,1}\\
   z_{t,2}\\
\end{bmatrix}=\begin{bmatrix}
   \underline{f}_1(z_{t,1},z_{t,2})\\
   \underline{f}_2(z_{t,1},z_{t,2})\\
\end{bmatrix},\quad z_0 < \begin{bmatrix}
   Q_0^A-Q^*\\
   Q_0^B-Q^*\\
\end{bmatrix},
\end{align*}
for all $t \geq 0$. Similar to the upper comparison systems, we can easily prove that $\overline{f}$ is quasi-monotone increasing, $\overline{f}$ and $\underline{f}$ are Lipschitz continuous. Therefore, applying similar steps as before and using~\cref{lemma:comparision-principle}, we have that $\begin{bmatrix}
   Q_t^A-Q^*\\
   Q_t^B-Q^*\\
\end{bmatrix} \ge \begin{bmatrix}
 Q_t^{A,l}- Q^* \\
 Q_t^{B,l}-Q^* \\
\end{bmatrix}$ holds for all $t\geq 0$, where $\begin{bmatrix}
 Q_t^{A,l}-Q^*\\
 Q_t^{B,l}-Q^*\\
\end{bmatrix}$ is the solution of the linear system

Now, it remains to prove the asymptotic convergence of the comparison systems. For notational convenience, we define $\Pi_\sigma$, $\sigma\in {\cal M}$ as $\Pi_{\pi_{Q_t^B}}$ such that $\sigma=\psi(\pi_{Q_t^B})$. Then, for the upper comparison switching system, we apply~\cref{lemma:fundamental-stability-lemma} with $A_{\sigma}=\begin{bmatrix}
   -D & \gamma D P\Pi_{\sigma}\\
   \delta I & -\delta I\\
\end{bmatrix}$ and $L= \begin{bmatrix}
   I & 0\\
   0 & \gamma^{1/2} I\\
\end{bmatrix}$, which satisfies $LA_\sigma=\bar A_\sigma L$ with $\bar A_\sigma=\begin{bmatrix}
   -D & \gamma^{1/2} D P\Pi_\sigma\\
   \gamma^{1/2}\delta I & -\delta I\\
\end{bmatrix}$. To check the strictly negative row dominating diagonal condition, for $i \in\{1,2,\ldots,|{\cal S}||{\cal A}|\}$, we have
\begin{align*}
[\bar A_\sigma]_{ii}+\sum_{j\in \{1,2,\ldots ,n\} \backslash \{ i\}}{|[\bar A_\sigma]_{ij}|}=& [-D]_{ii}+\gamma^{1/2}[-D ]_{ii} \sum_{j \in \{1,2,\ldots,n\}\backslash \{i\}} {|[P\Pi_\sigma]_{ij}|}\\
\le& [-D]_{ii}+\gamma^{1/2}[-D]_{ii}\\
\le& -1+\gamma^{1/2}
< 0.
\end{align*}

For $i\in \{|{\cal S}||{\cal A}|+1,|{\cal S}||{\cal A}|+2,\ldots,2|{\cal S}||{\cal A}|\}$, we also have
\begin{align*}
[\bar A_\sigma]_{ii}+\sum_{j\in\{1,2,\ldots,n\} \backslash \{i\}} {|[\bar A_\sigma]_{ij}|}=-\delta+\delta\gamma^{1/2}=\delta(-1+\gamma^{1/2})<0
\end{align*}
for any $\delta>0$. Therefore, the strictly negative row dominating diagonal condition is satisfied. By~\cref{lemma:fundamental-stability-lemma}, the origin of the switching system~\eqref{eq:switching-system-model-for-AQ-learning} is globally asymptotically stable. The lower comparison system's stability can be proved in an equivalent way. Since the switching system's solution is upper and lower bounded by the corresponding comparison systems, it asymptotically converges to the origin. This completes the proof.
\end{proof}

As a result, by invoking Borkar and Meyn's theorem and following similar arguments as before, we arrive at
\begin{theorem}\label{thm:averaging-Q-learning-convergence}
Consider~\cref{algo:averaging-Q-learning} and assume the step-sizes satisfy~\eqref{eq:step-size-rule}. Then, for any $\delta>0$, $Q^A_k\to Q^*$ and $Q^B_k\to Q^*$ with probability one.
\end{theorem}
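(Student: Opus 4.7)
The plan is to apply the Borkar--Meyn theorem (\cref{lemma:Borkar}) to the joint stochastic recursion $\theta_k := [(Q_k^A)^T, (Q_k^B)^T]^T \in {\mathbb R}^{2|{\cal S}||{\cal A}|}$ produced by \cref{algo:averaging-Q-learning}, following exactly the template of the proof of \cref{thm:Q-learning-convergence}. First I would rewrite the update in the form $\theta_{k+1} = \theta_k + \alpha_k (f(\theta_k) + \varepsilon_{k+1})$, where $f$ is the mean vector field read off from the affine switching ODE~\eqref{eq:switching-system-model-for-AQ-learning} and $\varepsilon_{k+1}$ is the deviation of the single-sample update from its conditional mean. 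Then I would verify each of the five items of \cref{assumption:1} in turn.

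For item~1, global Lipschitz continuity of $f$ follows because the $Q^A$-block matches the Q-learning vector field of \cref{prop:Lipschitz-f} (with the $\max$ taken with respect to $Q^B$), so the same chain of inequalities applies, while the $Q^B$-block is linear. The homogeneity limit exploits scale invariance of the greedy selector $\pi_{c\theta}=\pi_\theta$ for all $c>0$, together with the fact that the affine correction $\gamma DP(\Pi_{\pi_{Q^B}}-\Pi_{\pi_{Q^*}})Q^*$ is bounded while $\theta$ is scaled by $c\to\infty$, giving
\[
f_\infty(\theta)=\begin{bmatrix} -D & \gamma DP\Pi_{\pi_{Q^B}} \\ \delta I & -\delta I \end{bmatrix}\theta.
\]
For item~2, the origin of $\dot x_t = f_\infty(x_t)$ is globally asymptotically stable under arbitrary switching: this is precisely the linear switching system treated via the choice $L = \begin{bmatrix} I & 0 \\ 0 & \gamma^{1/2}I \end{bmatrix}$ together with the strictly negative row-dominating-diagonal calculation already carried out inside the proof of \cref{thm:averaging-Q-learning-stability}. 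Item~3 is the content of \cref{thm:averaging-Q-learning-stability} itself, with equilibrium $\theta^e = [(Q^*)^T,(Q^*)^T]^T$.

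For item~4, letting ${\cal G}_k := \sigma(\theta_i,\varepsilon_i, i \le k)$, the i.i.d. sampling assumption combined with conditional independence of $(s',r)$ given $(s,a)$ yields ${\mathbb E}[\varepsilon_{k+1}\mid{\cal G}_k] = 0$ verbatim as in the proof of \cref{thm:Q-learning-convergence}, and the conditional second-moment bound ${\mathbb E}[\|\varepsilon_{k+1}\|^2\mid{\cal G}_k] \le C_0(1+\|\theta_k\|^2)$ follows from boundedness of the rewards together with $\|\max_a Q^B(\cdot,a)\|_\infty \le \|Q^B\|_\infty$. Item~5 is supplied by hypothesis~\eqref{eq:step-size-rule}. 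With all five items in place, \cref{lemma:Borkar} delivers $\theta_k \to \theta^e$ almost surely, which is the claimed joint convergence $Q^A_k \to Q^*$ and $Q^B_k \to Q^*$.

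The only conceptually nontrivial step is item~2: one must correctly identify $f_\infty$ (recognizing that the affine term vanishes in the homogeneity limit) and then recognize that its stability is exactly what the $L$-matrix construction inside the proof of \cref{thm:averaging-Q-learning-stability} already provides. Everything else is bookkeeping that parallels the proof of \cref{thm:Q-learning-convergence}, so I expect no real new obstacles beyond carefully stacking the two blocks of variables in the notation.
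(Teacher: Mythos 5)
Your proposal is correct and follows essentially the same route as the paper, which simply invokes the Borkar--Meyn theorem (\cref{lemma:Borkar}) "following similar arguments as before"; you have filled in exactly the intended details, correctly identifying that item~2 of \cref{assumption:1} is discharged by the $L$-matrix row-dominating-diagonal calculation inside the proof of \cref{thm:averaging-Q-learning-stability} and item~3 by that theorem itself.
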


\section{Numerical examples demonstrating~\cref{thm:averaging-Q-learning-stability}}

Consider an MDP with ${\cal S}=\{1,2\}$, ${\cal A}=\{1,2\}$, $\gamma = 0.9$,
\begin{align*}
&P_1=\begin{bmatrix}
   0.2 & 0.8\\
   0.3 & 0.7 \\
\end{bmatrix},\quad P_2=\begin{bmatrix}
   0.5 & 0.5 \\
   0.7 & 0.3\\
\end{bmatrix},
\end{align*}
a behavior policy $\beta$ such that
\begin{align*}
&{\mathbb P}[a = 1|s = 1] = 0.2,\quad {\mathbb P}[a = 2|s = 1] = 0.8,\\
&{\mathbb P}[a = 1|s = 2] = 0.7,\quad {\mathbb P}[a = 2|s = 2] = 0.3,
\end{align*}
and
\begin{align*}
R_1  = \begin{bmatrix}
   3  \\
   1  \\
\end{bmatrix},\quad R_2  = \begin{bmatrix}
   2  \\
   1  \\
\end{bmatrix}
\end{align*}

Simulated trajectories of the O.D.E. model of the averaging Q-learning including the upper and lower comparison systems are depicted in~\cref{fig:appendix:2} for $Q^A_t$ part and~\cref{fig:appendix:3} for $Q^B_t$ part.
\begin{figure}[t]
\centering\includegraphics[width=16cm,height=12cm]{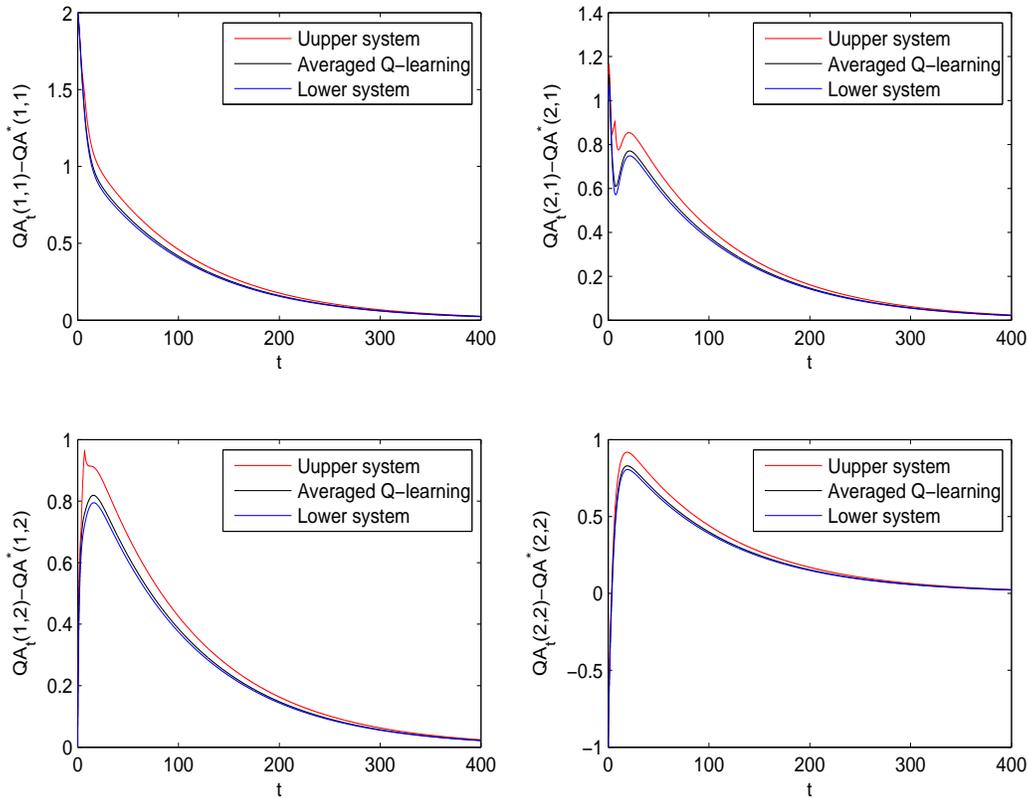}
\caption{Trajectories of the O.D.E. model of averaging Q-learning and the corresponding upper and lower comparison systems ($Q^A_t$ part)}\label{fig:appendix:2}
\end{figure}
\begin{figure}[t]
\centering\includegraphics[width=16cm,height=12cm]{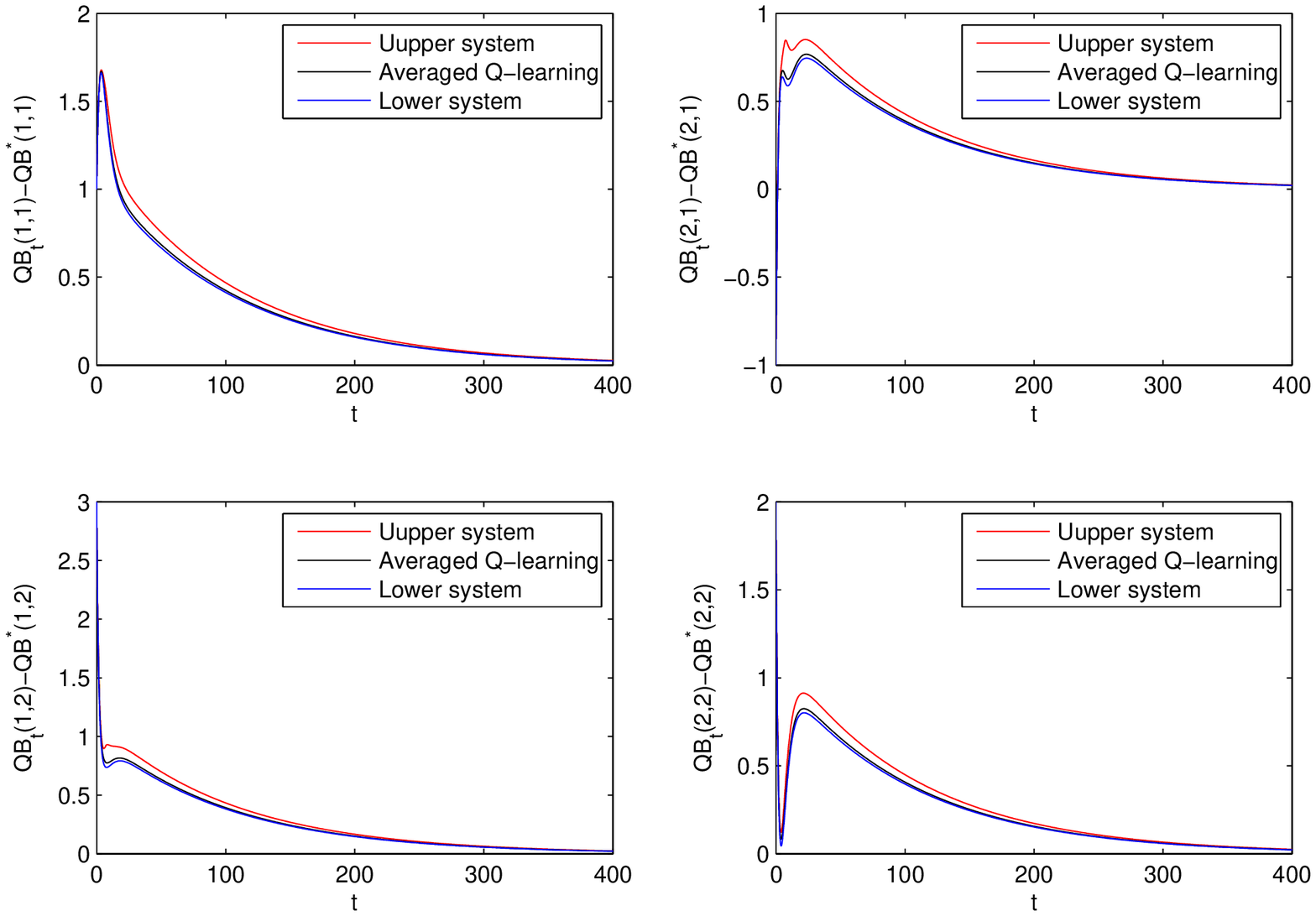}
\caption{Trajectories of the original O.D.E. model of averaging Q-learning and the corresponding upper and lower comparison systems ($Q^B_t$ part)}\label{fig:appendix:3}
\end{figure}

As before, the simulation study empirically proves that the O.D.E. model associated with the averaging Q-learning is asymptotically stable.

Moreover, the trajectory of the O.D.E. model (black line) is upper and lower bounded by the upper and lower comparison systems, respectively (red and blue lines, respectively).
This result provides numerical evidence that the bounding rules hold.

\section{Convergence of Q-learning with Linear Function Approximation}

When the state-space is large, linear function approximation can be used to approximate the optimal Q-function, $Q^*\cong\Phi\theta^*$, where $\Phi$ is the feature matrix. In particular, given pre-selected basis (or feature) functions $\phi_1,\ldots,\phi_n:{\cal S}\to {\mathbb R}$, the feature matrix $\Phi\in {\mathbb R}^{|{\cal S}| \times n}$ is defined as
\begin{align*}
\Phi:=\begin{bmatrix}
   \phi(1,1)^T\\
   \phi(2,1)^T\\
    \vdots \\
   \phi(|{\cal S}|,|{\cal A}|)^T\\
\end{bmatrix} \in {\mathbb R}^{|{\cal S}||{\cal A}| \times n},
\end{align*}
where $\phi(s,a)^T:=\begin{bmatrix}
   \phi_1(s,a),\phi_2(s,a), \cdots,\phi_n(s,a) \end{bmatrix}\in {\mathbb R}^n$. Here $n \ll |{\cal S}||{\cal A}|$ is a positive integer and $\phi(s)$ is a feature vector.

Q-learning with linear function approximation is described in~\cref{algo:standard-Q-learning-with-linear-function-approximation}, which may not converge in general~\citep{sutton1998reinforcement}. However, under certain conditions, its convergence can be proven. \cite{melo2008analysis} demonstrates the asymptotic convergence when assuming  that the distribution $d_a(s)$ of $(s,a)$ is the stationary distribution under a behavior policy $\beta$, i.e.,
\begin{align}
&d_a(s)=\lim_{k\to\infty} {\mathbb P}[(s_k,a_k)=(s,a)|\beta],\label{eq:stationary-distribution-assumption}
\end{align}
and the following condition holds:
\begin{align}
&\gamma^2\Phi^T\Pi_{\pi}^T D^\beta\Pi_{\pi}\Phi \prec \Phi^T D\Phi,\quad \forall \pi \in \Theta_\Phi,\label{eq:Melo-sufficient-condition}
\end{align}
where $\Theta_\Phi:=\{\pi\in\Theta :\pi(s)=\argmax_{a\in {\cal A}}(\Phi\theta)(s,a),\forall s \in {\cal S}, \theta\in {\mathbb R}^m\}$ and $D^\beta$ is a diagonal matrix whose diagonal entries correspond to the stationary state distribution of the underlying Markov decision process under the behavior policy $\beta$. Recently,~\cite{chen2019performance} considered a slightly stronger condition in order to obtain the convergence rate of Q-learning with linear function approximation.

In this section, we analyze the convergence from the switching system perspective and provide a new sufficient condition that ensures the asymptotic convergence. We start by introducing some basic assumptions that will be needed in the analysis.

\begin{algorithm}[t]
\caption{Standard Q-Learning with linear function approximation}
  \begin{algorithmic}[1]
    \State Initialize $\theta_0 \in {\mathbb R}^n$ randomly.
    \For{iteration $k=0,1,\ldots$}
    	\State Sample $(s,a)$
        \State Sample $s'\sim P_a(s,\cdot)$ and $r_a(s,s')$
        \State Update $\theta_{k+1}=\theta_k +\phi(s,a)\alpha_k\{r_a(s,s')+\gamma\max_{a\in {\cal A}}(\Phi\theta_k)(s',a)-(\Phi\theta_k)(s,a)\}$

    \EndFor

  \end{algorithmic}\label{algo:standard-Q-learning-with-linear-function-approximation}
\end{algorithm}

\begin{assumption}\label{assumption:nonnegative-elements}
$[\Phi]_{ij}\geq 0$ for all $i\in {\cal S}$ and $j\in \{1,2,\ldots,n \}$.
\end{assumption}
\begin{assumption}\label{assumption:orthogonal}
All column vectors of $\Phi$ are orthogonal.
\end{assumption}

\cref{assumption:nonnegative-elements} requires all elements of $\Phi$ to be nonnegative. This assumption is required in our convergence analysis to obtain lower and upper comparison systems of the switched linear system. In the case that no function approximation is used, $\Phi$ is set to be an identity matrix, $\Phi = I$, which automatically satisfies~\cref{assumption:nonnegative-elements}. We emphasize that this assumption is not very restrictive. For instance, if the values of rewards are nonnegative, then it is sufficient to set feature vectors with nonnegative elements when approximating the Q-function. If not, the rewards can always be made nonnegative by adding a large enough constant. \cref{assumption:orthogonal} is slightly stricter than the assumption of full column rank which is usually adopted in the RL literature.


Following a similar analysis as the previous section, the associated switched linear system model is given by
\begin{align*}
\frac{d}{dt}\theta_t=(\gamma\Phi^T DP\Pi_{\pi_{\Phi\theta_t}}\Phi-\Phi^T D\Phi)\theta_t+\Phi^T DR,\quad \theta_0\in {\mathbb R}^n,
\end{align*}
or equivalently,
\begin{align}
\frac{d}{dt}(\theta_t-\theta^*)=&(\gamma\Phi^T DP\Pi_{\pi_{\Phi \theta_t}} \Phi-\Phi^T D\Phi)(\theta_t-\theta^*)+\gamma\Phi^T DP(\Pi_{\pi_{\Phi\theta_t}}-\Pi_{\pi_{\Phi\theta^*}})\Phi\theta^*,\nonumber\\
\theta_0-\theta^*=& z\in {\mathbb R}^n,\label{eq:switched-system-model-LFA}
\end{align}
where $\pi_{\Phi\theta_t}(s)=\argmax_{a\in {\cal A}}(\Phi\theta_t)(s,a)$ and $\theta^*$ is the optimal parameter satisfying the projected Bellman equation
\begin{align*}
&\Phi\theta^*=\Gamma(\gamma P\Pi_{\pi_{\Phi\theta^*}}\Phi\theta^*+R),
\end{align*}
and $\Gamma:=\Phi(\Phi^T D\Phi)^{-1}\Phi^T D$ is the projection onto the range of $\Phi$.

We will first establish the asymptotic stability of the system~\eqref{eq:switched-system-model-LFA}.
\begin{proposition}\label{prop:asymptotic-stability-of-switched-system-model-LFA}
Suppose that~\cref{assumption:nonnegative-elements} and~\cref{assumption:orthogonal} hold. The origin is the unique globally asymptotically stable equilibrium point of the affine switching system~\eqref{eq:switched-system-model-LFA} if the following condition holds:
\begin{align*}
&-\phi_i^T D\phi_i+\phi_i^T\gamma DP\Pi_{\psi(\pi)}\sum_{j\in \{1,2,\ldots,n\}}{\phi_j}<0,\quad \pi\in\Theta_\Phi,
\end{align*}
where $\Theta_\Phi:=\{\pi\in\Theta :\pi(s)=\argmax_{a\in {\cal A}}(\Phi\theta)(s,a),\forall s \in {\cal S}, \theta\in {\mathbb R}^m\}$ and
\begin{align*}
\phi_i=\begin{bmatrix} \phi_i(1,1) & \phi_i(2,1) & \cdots & \phi_i(|{\cal S}|,|{\cal A}|)\\ \end{bmatrix}^T \in {\mathbb R}^{|{\cal S}||{\cal A}|}.
\end{align*}
For a computationally tractable sufficient condition, $\Theta_\Phi$ can be replaced with $\Theta$.
\end{proposition}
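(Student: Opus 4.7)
The plan is to mimic the comparison-system strategy used in the proof of \cref{thm:swithcing-system-stability}, now in the parameter space $\mathbb{R}^n$, and reduce everything to a row-dominating-diagonal check via \cref{lemma:fundamental-stability-lemma}. The first key observation is that \cref{assumption:nonnegative-elements} and \cref{assumption:orthogonal} together force $\Phi^T D\Phi$ to be \emph{diagonal}: two nonnegative columns of $\Phi$ that are orthogonal must have disjoint support, so $\phi_i^T D\phi_j=0$ whenever $i\ne j$. This diagonality is the structural fact that makes all subsequent arguments go through as in the tabular case.

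Given this, I would construct upper and lower comparison systems of~\eqref{eq:switched-system-model-LFA} in direct analogy with the proof of \cref{thm:swithcing-system-stability}. Because $\Pi_{\pi_{\Phi\theta^*}}\Phi\theta^*$ picks the entrywise maximum across actions, $(\Pi_{\pi_{\Phi\theta_t}}-\Pi_{\pi_{\Phi\theta^*}})\Phi\theta^*\le 0$ elementwise, and premultiplying by the nonnegative matrix $\gamma\Phi^T DP$ (using \cref{assumption:nonnegative-elements}) preserves the sign, so the affine term in~\eqref{eq:switched-system-model-LFA} is entrywise nonpositive. This yields the homogeneous switched linear upper comparison system
$$\frac{d}{dt}(\theta_t^u-\theta^*)=(\gamma\Phi^T DP\Pi_{\pi_{\Phi\theta_t^u}}\Phi-\Phi^T D\Phi)(\theta_t^u-\theta^*).$$
For the lower bound, $\Pi_{\pi_{\Phi\theta_t}}\Phi\theta_t\ge\Pi_{\pi_{\Phi\theta^*}}\Phi\theta_t$ gives a linear time-invariant lower comparison system obtained by freezing the switching mode to $\pi_{\Phi\theta^*}$. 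Applying \cref{lemma:comparision-principle} requires quasi-monotonicity of $\overline{f}(y)=(\gamma\Phi^T DP\Pi_{\pi_{\Phi y}}\Phi-\Phi^T D\Phi)y$: for nonnegative $p$ with $p_i=0$, the linear part obeys $e_i^T\Phi^T D\Phi(y+p)=\phi_i^T D\phi_i\,y_i$ by diagonality, while $\Phi(y+p)\ge\Phi y$ entrywise gives $\max_a(\Phi(y+p))(s,a)\ge\max_a(\Phi y)(s,a)$ for every $s$, which is preserved after premultiplying by the nonnegative row $\gamma\phi_i^T DP$. Lipschitz continuity of $\overline{f}$ and of the lower vector field follows exactly as in \cref{prop:Lipschitz-f}.

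The final step is to verify asymptotic stability of the upper comparison switched linear system by invoking \cref{lemma:fundamental-stability-lemma} with $L=I$ and $\bar A_\sigma=A_\sigma$. For any mode $\sigma=\psi(\pi)$ with $\pi\in\Theta_\Phi$, diagonality of $\Phi^T D\Phi$ gives $[A_\sigma]_{ij}=\gamma\phi_i^T DP\Pi_{\psi(\pi)}\phi_j\ge 0$ for $j\ne i$, hence $|[A_\sigma]_{ij}|=[A_\sigma]_{ij}$ and
$$[A_\sigma]_{ii}+\sum_{j\ne i}|[A_\sigma]_{ij}|=-\phi_i^T D\phi_i+\gamma\phi_i^T DP\Pi_{\psi(\pi)}\sum_{j=1}^n\phi_j,$$
which is strictly negative precisely by the hypothesis of the proposition. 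The lower comparison system is the single subsystem corresponding to $\pi_{\Phi\theta^*}\in\Theta_\Phi$, to which the same bound applies. Sandwiching then forces $\theta_t\to\theta^*$. The closing remark (replacing $\Theta_\Phi$ with $\Theta$) is immediate since $\Theta_\Phi\subseteq\Theta$, so requiring the inequality over the larger set $\Theta$ is a stronger but computationally simpler condition that avoids enumerating $\Theta_\Phi$.

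The main obstacle is the quasi-monotonicity verification in the parameter space $\mathbb{R}^n$: unlike the tabular case where $-D$ is diagonal for free, here one must first neutralize the cross-terms of $-\Phi^T D\Phi$. \cref{assumption:orthogonal} together with \cref{assumption:nonnegative-elements} is tailored to kill these cross-terms by making $\Phi^T D\Phi$ diagonal, after which the proof tracks that of \cref{thm:swithcing-system-stability} essentially line by line, and the hypothesis of the proposition is exactly the row-dominating-diagonal inequality needed by \cref{lemma:fundamental-stability-lemma}.
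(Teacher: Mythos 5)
Your proposal is correct and follows essentially the same route as the paper's proof: bounding the affine system between a switched linear upper comparison system and an LTI lower comparison system via the vector comparison principle, and then verifying the strictly negative row dominating diagonal condition of \cref{lemma:fundamental-stability-lemma} with $L=I$, which is exactly the stated hypothesis. Your explicit observation that \cref{assumption:nonnegative-elements} and \cref{assumption:orthogonal} force the columns of $\Phi$ to have disjoint supports, so that $\Phi^T D\Phi$ is diagonal, is a welcome clarification of a step the paper only asserts.
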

\begin{proof}

By~\cref{assumption:nonnegative-elements}, it holds that $\gamma\Phi^T DP(\Pi_{\pi_{\Phi\theta_t^l}}-\Pi_{\pi_{\Phi\theta^*}})\Phi\theta^*\le 0$, $\Phi^T (\gamma DP\Pi_{\pi_{\Phi\theta_t^u}}-D)\Phi (\theta_t^u-\theta^*) \leq \Phi^T (\gamma DP\Pi_{\pi_{\Phi(\theta_t^u-\theta^*)}}-D)\Phi (\theta_t^u-\theta^*)$, and we obtain the upper comparison system
\begin{align}
\frac{d}{dt}(\theta_t^u - \theta^*)=&\Phi^T (\gamma DP\Pi_{\pi_{\Phi(\theta_t^u-\theta^*)}}-D)\Phi (\theta_t^u-\theta^*),\nonumber\\
\theta_0^u-\theta^*>&\theta_0-\theta^*\in {\mathbb R}^n.\label{eq:upper-comparison-system-LFA}
\end{align}
To proceed, define the vector functions
\begin{align*}
\overline{f}(y)=& \Phi^T (\gamma DP\Pi_{\pi_{\Phi y}}-D)\Phi y\\
\underline{f}(z)=& (\gamma\Phi^T DP\Pi_{\pi_{\Phi (z+\theta^*)}} \Phi-\Phi^T D\Phi)z+\gamma\Phi^T DP(\Pi_{\pi_{\Phi(z+\theta^*)}}-\Pi_{\pi_{\Phi\theta^*}})\Phi\theta^*,,
\end{align*}
and consider the systems
\begin{align*}
\frac{d}{dt}y_t = \overline{f}(y_t),\quad y_0 > \theta_0-\theta^*,\\
\frac{d}{dt}z_t = \underline{f}(z_t),\quad z_0 = \theta_0-\theta^*,
\end{align*}
for all $t \geq 0$. To apply~\cref{lemma:fundamental-stability-lemma}, we first check the quasi-monotonicity of $\overline{f}$. For any nonnegative vector $p$ such that its $i$th element is zero, we have
\begin{align*}
e_i^T \overline{f}(y+p)=& e_i^T(\gamma\Phi^T DP\Pi_{\Phi(y+p)}\Phi-\Phi^T D\Phi)(y+p)\\
=&\gamma e_i^T\Phi^T DP\Pi_{\Phi(y+p)}\Phi (y+p)-e_i^T\Phi^T D\Phi p-e_i^T\Phi^T D\Phi y\\
=&\gamma e_i^T\Phi^T DP\Pi_{\Phi(y+p)}\Phi (y+p)-e_i^T\Phi^T D\Phi y\\
=& \gamma e_i^T\Phi^T DP \begin{bmatrix}
   \max_a(\Phi(y+p))_a(1)\\
   \max_a(\Phi(y+p))_a(2)\\
    \vdots \\
   \max_a(\Phi(y+p))_a(|{\cal S}|)\\
\end{bmatrix}-e_i^T \Phi^T D\Phi y\\
\ge&\gamma e_i^T\Phi^T DP \begin{bmatrix}
   \max_a(\Phi(y))_a(1)\\
   \max_a(\Phi(y))_a(2)\\
    \vdots\\
   \max_a(\Phi(y))_a(|{\cal S}|)\\
\end{bmatrix}-e_i^T\Phi^T D\Phi y\\
=& \gamma e_i^T\Phi^T DP\Pi_{\Phi(y)}\Phi (y)-e_i^T\Phi^T D\Phi y\\
=& e_i^T \overline{f}(y),
\end{align*}
where the third line is due to~\cref{assumption:orthogonal} and the fact that $\Phi^T D\Phi$ is a diagonal matrix. Therefore, $\overline{f}$ is quasi-monotone increasing. The Lipschitz continuity of $\overline{f}$ and $\underline{f}$ can be provided following similar lines of the proof of~\cref{prop:Lipschitz-f}, where we can use the fact that $f(z)=(\gamma DP\Pi_{\pi_{(z+Q^*)}}-D)(z+Q^*)+DR$. Therefore, the vector comparison principle,~\cref{lemma:comparision-principle}, leads to $\theta_t \leq \theta_t^u$ as $t \to \infty$.

On the other hand, by~\cref{assumption:nonnegative-elements}, it holds that $\gamma\Phi^T DP\Pi_{\pi_{\Phi\theta_t}} \Phi\theta_t\ge\gamma\Phi^T DP\Pi_{\pi_{\Phi\theta^*}}\Phi\theta_t$, and we obtain the lower comparison system
\begin{align*}
\frac{d}{dt}(\theta_t^l-\theta^*)=&\Phi^T (\gamma DP\Pi_{\pi_{\Phi\theta^*}}-D)\Phi (\theta_t^l-\theta^*),\\
\theta_0^l-\theta^*<& \theta_0-\theta^*\in {\mathbb R}^n,
\end{align*}
or equivalently,
\begin{align*}
\frac{d}{dt}\theta_t^l=& \Phi^T (\gamma DP\Pi_{\pi_{\Phi\theta^*}}-D)\Phi \theta_t^l - \Phi^T (\gamma DP\Pi_{\pi_{\Phi\theta^*}}-D)\Phi \theta^*,\\
\theta_0^l<& \theta_0\in {\mathbb R}^n.
\end{align*}

To proceed, define the vector functions
\begin{align*}
\overline{f}(y)=&\Phi^T(\gamma DP\Pi_{\pi_{\Phi y}}- D)\Phi y+\Phi^T DR\\
\underline{f}(z)=& \Phi^T (\gamma DP\Pi_{\pi_{\Phi\theta^*}}-D)\Phi z - \Phi^T (\gamma DP\Pi_{\pi_{\Phi\theta^*}}-D)\Phi \theta^*,
\end{align*}
and consider the systems
\begin{align*}
\frac{d}{dt}y_t = \overline{f}(y_t),\quad y_0 = \theta_0,\\
\frac{d}{dt}z_t = \underline{f}(z_t),\quad z_0 < \theta_0,
\end{align*}
for all $t \geq 0$. To apply~\cref{lemma:fundamental-stability-lemma}, we check the quasi-monotonicity of $\overline{f}$, which can be easily proved following the steps for the upper comparison system. The Lipschitz continuity of $\overline{f}$ and $\underline{f}$ can be also proved following similar lines of the proof of~\cref{prop:Lipschitz-f}. Therefore,~\cref{lemma:comparision-principle} leads to $\theta_t \geq \theta_t^l$ as $t \to \infty$.

To prove the asymptotic stability of the original system~\eqref{eq:switched-system-model-LFA}, it is sufficient to prove that the upper and lower comparison systems are globally asymptotically stable. In this respect, we can apply~\cref{lemma:fundamental-stability-lemma} to obtain a sufficient condition for the stability. In particular, both the upper and lower comparison systems are globally asymptotically stable if the switching system is globally asymptotically stable
\begin{align*}
&\frac{d}{dt}\theta_t=A_{\sigma_t}\theta_t,
\end{align*}
under arbitrary switchings, $\sigma_t$, where $A_{\psi(\pi)}=\Phi^T (\gamma DP\Pi_{\psi(\pi)}-D)\Phi$ for all $\pi\in\Theta_\Phi$. By~\cref{lemma:fundamental-stability-lemma}, it is true if and only if
\begin{align*}
&[A_{\psi(\pi)}]_{ii}+\sum_{j\in \{1,2,\ldots,n\} \backslash \{i\}}{|[A_{\psi(\pi)}]_{ij}|}\\
=&[\Phi^T(\gamma DP\Pi_{\psi(\pi)}-D)\Phi]_{ii}+\sum_{j\in \{ 1,2,\ldots,n\}\backslash \{i\}}{|[\Phi^T (\gamma DP\Pi_{\psi(\pi)}-D)\Phi]_{ij}|}\\
=& \phi_i^T (\gamma DP\Pi_{\psi(\pi)})\phi_i-\phi_i^T D\phi_i+\sum_{j\in \{1,2,\ldots,n\} \backslash \{i\}} {\phi_i^T(\gamma DP\Pi_{\psi(\pi)}-D)\phi_j}\\
=&-\phi_i^T D\phi_i+\sum_{j\in \{1,2,\ldots ,n\}}{\phi_i^T\gamma DP\Pi_{\psi(\pi)}\phi_j}\\
=&-\phi_i^T D\phi_i+\phi_i^T \gamma DP\Pi_{\psi(\pi)} \sum_{j\in \{1,2,\ldots,n\}}{\phi_j}\\
<&0
\end{align*}
for all $i\in\{1,2,\ldots,n\},\pi\in\Theta_\Phi$, where the second line is due to~\cref{assumption:nonnegative-elements}, and the fourth line is due to~\cref{assumption:orthogonal} and the fact that $\phi_i^T D\phi_j = 0$ for $j \neq i$. This completes the proof.
\end{proof}

Since the underlying switching system ODE model turns out to be asymptotically stable under the condition in~\cref{prop:asymptotic-stability-of-switched-system-model-LFA}, we can also prove the convergence of~\cref{algo:standard-Q-learning-with-linear-function-approximation}.
\begin{proposition}\label{prop:sufficient-condition-Q-learning-LFA}
Suppose that~\cref{assumption:nonnegative-elements} and~\cref{assumption:orthogonal} hold. The iterates of~\cref{algo:standard-Q-learning-with-linear-function-approximation} converges to $\theta^*$ if the condition in~\cref{prop:asymptotic-stability-of-switched-system-model-LFA} holds.
\end{proposition}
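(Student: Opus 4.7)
The plan is to invoke the Borkar and Meyn theorem (\cref{lemma:Borkar}) in exactly the same spirit as the proof of \cref{thm:Q-learning-convergence}. First I would rewrite the update in~\cref{algo:standard-Q-learning-with-linear-function-approximation} in the standard stochastic-approximation form $\theta_{k+1}=\theta_k+\alpha_k(f(\theta_k)+\varepsilon_{k+1})$, where
\begin{align*}
f(\theta) = (\gamma \Phi^T D P \Pi_{\pi_{\Phi\theta}}\Phi - \Phi^T D\Phi)(\theta-\theta^*) + \gamma \Phi^T D P(\Pi_{\pi_{\Phi\theta}} - \Pi_{\pi_{\Phi\theta^*}})\Phi\theta^* + \text{(constant)},
\end{align*}
(after using the projected Bellman equation for $\theta^*$) and $\varepsilon_{k+1}$ collects the sampling noise $\phi(s,a)\{r_a(s,s')+\gamma\max_a(\Phi\theta_k)(s',a)-(\Phi\theta_k)(s,a)\}$ minus its conditional expectation. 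With this identification, the associated ODE is precisely the switched affine system~\eqref{eq:switched-system-model-LFA}.

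Next I would check each clause of~\cref{assumption:1} in turn. For clause 1, global Lipschitz continuity of $f$ follows by mimicking~\cref{prop:Lipschitz-f}: the only nonlinearity is the $\max$ implicit in $\Pi_{\pi_{\Phi\theta}}$, and $\|\Pi_{\pi_{\Phi x}}\Phi x - \Pi_{\pi_{\Phi y}}\Phi y\|_\infty \le \|\Phi\|_\infty\|x-y\|_\infty$ by the same max-of-differences estimate. For the limit function, homogeneity of the greedy policy, $\pi_{c\Phi\theta}=\pi_{\Phi\theta}$ for $c>0$, yields
\begin{align*}
f_\infty(\theta) = \lim_{c\to\infty}\frac{f(c\theta)}{c} = (\gamma \Phi^T D P\Pi_{\pi_{\Phi\theta}}\Phi - \Phi^T D\Phi)\theta,
\end{align*}
so that the affine perturbation drops out. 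Clauses 2 and 3 are then essentially given to us: the ODE $\dot x_t=f_\infty(x_t)$ is the homogeneous switched linear system $\dot\theta_t = A_{\sigma_t}\theta_t$ whose global asymptotic stability under arbitrary switching was established inside the proof of~\cref{prop:asymptotic-stability-of-switched-system-model-LFA} via~\cref{lemma:fundamental-stability-lemma}, and the global asymptotic stability of the full affine ODE to $\theta^*$ is exactly the statement of~\cref{prop:asymptotic-stability-of-switched-system-model-LFA}.

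For clause 4, with ${\cal G}_k = \sigma(\theta_0,\ldots,\theta_k,\varepsilon_1,\ldots,\varepsilon_k)$, the i.i.d.\ sampling assumption gives ${\mathbb E}[\varepsilon_{k+1}\mid{\cal G}_k]=0$ by the same bookkeeping as in the proof of~\cref{thm:Q-learning-convergence}, and the quadratic bound ${\mathbb E}[\|\varepsilon_{k+1}\|^2\mid {\cal G}_k]\le C_0(1+\|\theta_k\|^2)$ follows from boundedness of rewards and features together with the Lipschitz bound on the greedy backup. Clause 5 is assumed via the step-size rule~\eqref{eq:step-size-rule}. Applying~\cref{lemma:Borkar} then yields $\theta_k\to\theta^*$ almost surely, completing the proof. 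The main obstacle is really just the bookkeeping surrounding $f_\infty$ and the martingale noise; all substantive dynamical content has already been absorbed into~\cref{prop:asymptotic-stability-of-switched-system-model-LFA}, so no new switching-system argument is needed here.
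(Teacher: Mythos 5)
Your proposal is correct and follows essentially the same route as the paper: the paper's own proof is a one-line reduction to the argument of~\cref{subsec:proof-of-Q-learning} via~\cref{lemma:Borkar}, and your clause-by-clause verification of~\cref{assumption:1} (Lipschitz continuity and the limit $f_\infty$ from policy homogeneity, stability of the homogeneous switched system for clause 2, \cref{prop:asymptotic-stability-of-switched-system-model-LFA} for clause 3, and the martingale-difference noise bound) is exactly the intended instantiation. No gaps.
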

\begin{proof}
The proof can be completed following similar steps used to prove the convergence of Q-learning in~\cref{subsec:proof-of-Q-learning} and using~\cref{lemma:Borkar}.
\end{proof}

\begin{proposition}
Suppose that~\cref{assumption:nonnegative-elements} and~\cref{assumption:orthogonal} hold. In addition, assume that the elements of the feature matrix $\Phi$ are binary numbers, i.e., $\{0,1\}$. Then, the condition in~\cref{prop:sufficient-condition-Q-learning-LFA} always holds.
\end{proposition}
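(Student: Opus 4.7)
The plan is to exploit the binary structure of $\Phi$ together with the orthogonality assumption to show that $\sum_j \phi_j$ is itself a binary vector, and then use row-stochasticity of $P\Pi_{\psi(\pi)}$ to bound each term in the sufficient condition by $\gamma\phi_i^T D\phi_i$. Since $\gamma<1$, this collapses the condition to $(\gamma-1)\phi_i^T D\phi_i<0$, which holds by \cref{assumption:positive-distribution}.

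First, I would observe that \cref{assumption:orthogonal} requires $\phi_i^T\phi_j=0$ for $i\neq j$. Combined with the binary hypothesis $[\Phi]_{kl}\in\{0,1\}$, this forces the columns $\phi_1,\ldots,\phi_n$ to have pairwise disjoint supports: if $\phi_i(k)=\phi_j(k)=1$ for some $k$, the inner product would be strictly positive. Consequently, $\sum_{j=1}^n\phi_j$ is itself a $\{0,1\}$-valued vector in ${\mathbb R}^{|{\cal S}||{\cal A}|}$, i.e., $\sum_j\phi_j\le{\bf 1}$ componentwise.

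Next, I would use the fact that $P\Pi_{\psi(\pi)}\in{\mathbb R}^{|{\cal S}||{\cal A}|\times|{\cal S}||{\cal A}|}$ is row-stochastic (it is the state-action pair transition matrix under the deterministic policy associated with $\psi(\pi)$), so $P\Pi_{\psi(\pi)}{\bf 1}={\bf 1}$. Combining with the previous step,
\begin{align*}
P\Pi_{\psi(\pi)}\sum_{j=1}^n\phi_j\le P\Pi_{\psi(\pi)}{\bf 1}={\bf 1},
\end{align*}
componentwise. Multiplying by $\gamma D$ (a nonnegative diagonal matrix) preserves the inequality, giving $\gamma DP\Pi_{\psi(\pi)}\sum_j\phi_j\le \gamma D{\bf 1}$. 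Left-multiplying by the nonnegative row vector $\phi_i^T$ and using $\phi_i^T D{\bf 1}=\phi_i^T D\phi_i$ (because $\phi_i$ is $\{0,1\}$-valued and $D$ is diagonal) yields
\begin{align*}
\phi_i^T\gamma DP\Pi_{\psi(\pi)}\sum_{j=1}^n\phi_j\le \gamma\phi_i^T D\phi_i.
\end{align*}

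Plugging this into the sufficient condition of \cref{prop:asymptotic-stability-of-switched-system-model-LFA} gives
\begin{align*}
-\phi_i^T D\phi_i+\phi_i^T\gamma DP\Pi_{\psi(\pi)}\sum_{j=1}^n\phi_j\le(\gamma-1)\phi_i^T D\phi_i,
\end{align*}
and since $\gamma\in[0,1)$ and $\phi_i^T D\phi_i=\sum_{k:\phi_i(k)=1}[D]_{kk}>0$ by \cref{assumption:positive-distribution} (assuming, without loss of generality, that no column of $\Phi$ is identically zero), the right-hand side is strictly negative. This verifies the condition for every $\pi\in\Theta_\Phi$, completing the proof. The only subtlety I anticipate is making explicit the reduction $\phi_i^T D{\bf 1}=\phi_i^T D\phi_i$, which is immediate from the binary structure but deserves a line of justification; the rest is essentially a direct consequence of row-stochasticity and the disjoint-support property.
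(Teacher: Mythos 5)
Your proof is correct and follows essentially the same route as the paper's: you reduce $\sum_j\phi_j\le{\bf 1}$ via the disjoint-support/binary structure, invoke row-stochasticity of $P\Pi_{\psi(\pi)}$, and collapse the bound to $(\gamma-1)\phi_i^T D\phi_i<0$. Your explicit justification of $\phi_i^T D{\bf 1}=\phi_i^T D\phi_i$ and the remark that columns must be nonzero are small clarifications the paper leaves implicit, but the argument is the same.
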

\begin{proof}
If the elements of the feature matrix $\Phi$ are binary numbers, then since the columns of $\Phi$ consist of sums of distinct basis vectors, $e_i \in {\mathbb R}^{|{\cal S}||{\cal A}|}$, and it follows that
\begin{align}
\sum_{j\in \{ 1,2, \ldots ,n\}}{\phi_j}  \leq {\bf 1}_{|{\cal S}||{\cal A}|},\label{eq:3}
\end{align}
where ${\bf 1}_{|{\cal S}||{\cal A}|}$ is the vector with all elements being ones. The right-hand side of the condition in~\cref{prop:sufficient-condition-Q-learning-LFA} is bounded as
\begin{align*}
[A_{\psi(\pi)}]_{ii}+\sum_{j\in \{1,2,\ldots,n\}\backslash \{i\}}{|[A_{\psi(\pi)}]_{ij}|}=&-\phi_i^T D\phi_i+\phi_i^T \gamma DP\Pi_{\psi(\pi)}\sum_{j\in\{1,2,\ldots,n\}}{\phi_j}\\
\le&-\phi_i^T D\phi_i+\phi_i^T \gamma DP\Pi_{\psi(\pi)}{\bf 1}_{|{\cal S}||{\cal A}|}\\
=&-\phi_i^T D\phi_i+\gamma\phi_i^T D{\bf 1}_{|{\cal S}||{\cal A}|}\\
=&-\phi_i^T D\phi_i+\gamma\phi_i^T D\phi_i\\
=& (\gamma-1)\phi_i^T D\phi_i\\
<&0,
\end{align*}
where the first line comes from~\cref{prop:sufficient-condition-Q-learning-LFA}, the second line is due to~\eqref{eq:3}, and the third line is due to the fact that $P\Pi_{\psi(\pi)}$ is a stochastic matrix, i.e., its low sums are one. This completes the proof.
\end{proof}

In the sequel, we give a simple MDP which satisfies the sufficient condition in~\cref{prop:sufficient-condition-Q-learning-LFA}.
\begin{example}
Consider an MDP with ${\cal S}=\{1,2\}$, ${\cal A}=\{1,2\}$, $\gamma=0.9$,
\begin{align*}
&P_1=\begin{bmatrix}
   1/2 & 1/2\\
   1 & 0 \\
\end{bmatrix},\quad P_2=\begin{bmatrix}
   0 & 1 \\
   2/3 & 1/3\\
\end{bmatrix},\\
&D= \begin{bmatrix}
   1/4 & 0 & 0 & 0\\
   0 & 1/4 & 0 & 0\\
   0 & 0 & 1/4 & 0\\
   0 & 0 & 0 & 1/4\\
\end{bmatrix},\quad \Phi = \begin{bmatrix}
   1 & 0\\
   1 & 0\\
   0 & 1\\
   0 & 1\\
\end{bmatrix}.
\end{align*}

The quantities in~\cref{prop:sufficient-condition-Q-learning-LFA} are given by
\begin{align*}
&[A_{\psi(\pi)}]_{11}+|[A_{\psi(\pi)}]_{12}|=-0.1625+0.1125<0,\\
&[A_{\psi(\pi)}]_{22}+|[A_{\psi(\pi)}]_{21}|=-0.2+0.15<0.
\end{align*}
for all possible deterministic policies $\pi\in\Theta$. Therefore,~\cref{algo:standard-Q-learning-with-linear-function-approximation} converges to the optimal $\theta^*$.
\end{example}

Comparing to the Melo's sufficient condition~\eqref{eq:Melo-sufficient-condition}, the new condition provided in~\cref{prop:sufficient-condition-Q-learning-LFA} applies to arbitrary state-action distributions, not limited to the stationary distributions of behavior policies. Although the assumptions adopted here are somewhat restrictive, we can prove that under these assumptions, the condition in~\cref{prop:sufficient-condition-Q-learning-LFA} is actually less conservative than Melo's sufficient condition~\eqref{eq:Melo-sufficient-condition}.


\begin{proposition}\label{prop:less-conservative}
Assume that~\cref{assumption:nonnegative-elements} and~\cref{assumption:orthogonal} hold, and~\eqref{eq:stationary-distribution-assumption} is satisfied. Then, Melo's sufficient condition~\eqref{eq:Melo-sufficient-condition} implies the condition in~\cref{prop:sufficient-condition-Q-learning-LFA}.
\end{proposition}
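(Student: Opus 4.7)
The plan is to exploit the fact that Assumptions~\ref{assumption:nonnegative-elements} and \ref{assumption:orthogonal} collapse the two Gram-type matrices appearing in Melo's PD condition to diagonal matrices, so that Melo's matrix inequality becomes a scalar per-feature statement, and then to bound the row-sum quantity of Proposition~\ref{prop:sufficient-condition-Q-learning-LFA} via a support-restricted Cauchy--Schwarz step combined with the stationarity contraction $\|Pv\|_D\le\|v\|_{D^\beta}$ that is inherited from $d^\beta(s')=\sum_{(s,a)}d_a(s)P_a(s,s')$.

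I would first observe that $\phi_i\ge 0$ together with $\phi_i^T\phi_j=0$ for $i\ne j$ forces the supports $S_i:=\mathrm{supp}(\phi_i)\subseteq \mathcal S\times\mathcal A$ to be pairwise disjoint. This immediately yields $\phi_i^T D\phi_j=0$ for $i\ne j$, so $\Phi^T D\Phi$ is diagonal with entries $\|\phi_i\|_D^2$. The same disjointness ensures $\phi_i(s,\pi(s))\phi_j(s,\pi(s))=0$ for every $s\in\mathcal S$ and $i\ne j$, hence $\Phi^T\Pi_\pi^T D^\beta\Pi_\pi\Phi$ is also diagonal, with entries $\|\Pi_\pi\phi_i\|_{D^\beta}^2$. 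Consequently, Melo's condition~\eqref{eq:Melo-sufficient-condition} is equivalent to the scalar per-feature inequality $\gamma\,\|\Pi_\pi\phi_i\|_{D^\beta}<\|\phi_i\|_D$ for every $i\in\{1,\ldots,n\}$ and every $\pi\in\Theta_\Phi$.

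Next, I would rewrite the condition of Proposition~\ref{prop:sufficient-condition-Q-learning-LFA} as $\gamma\,\phi_i^T D(P\Pi_\pi w)<\|\phi_i\|_D^2$, where $w:=\sum_j\phi_j$, and bound the left-hand side with a Cauchy--Schwarz step restricted to $S_i$:
\[
\phi_i^T D(P\Pi_\pi w)\le\|\phi_i\|_D\Bigl(\textstyle\sum_{(s,a)\in S_i}d_a(s)\,(P\Pi_\pi w)(s,a)^2\Bigr)^{1/2}.
\]
Jensen's inequality applied inside the square root, combined with the stationarity identity $d^\beta(s')=\sum_{(s,a)}d_a(s)P_a(s,s')$, then reduces the radical to $\sqrt{\sum_{s'}q_i(s')(\Pi_\pi w)(s')^2}$, where the restricted next-state mass $q_i(s'):=\sum_{(s,a)\in S_i}d_a(s)P_a(s,s')$ satisfies $q_i(s')\le d^\beta(s')$ pointwise. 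Since disjointness of the $S_j$'s forces the $s$-supports of the $\Pi_\pi\phi_j$'s to be disjoint as well, the Pythagorean decomposition $(\Pi_\pi w)(s)^2=\sum_j(\Pi_\pi\phi_j)(s)^2$ holds pointwise, which together with the per-feature Melo bound from Step~1 should then close the argument to give the desired strict inequality.

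The main obstacle is precisely this last estimate. A naive combination of the pointwise bound $q_i\le d^\beta$ with the per-$j$ Melo bound $\gamma^2\|\Pi_\pi\phi_j\|_{D^\beta}^2<\|\phi_j\|_D^2$ picks up an extra factor of $\sum_j\|\phi_j\|_D^2$ on the right-hand side, which is strictly larger than $\|\phi_i\|_D^2$ whenever $n>1$ and thus too loose to close. The missing ingredient is a sharper accounting that exploits the fact that $q_i$ only charges next-state transitions originating from $S_i$, so that when summed against the disjoint $s$-supports of the $\Pi_\pi\phi_j$'s, only the contributions tied to the $i$-th feature survive. Pinning down this cancellation is where the bulk of the proof effort must go and is the reason Melo's matrix condition is strictly stronger than the row-sum condition of Proposition~\ref{prop:sufficient-condition-Q-learning-LFA} under Assumptions~\ref{assumption:nonnegative-elements}, \ref{assumption:orthogonal}, and~\eqref{eq:stationary-distribution-assumption}.
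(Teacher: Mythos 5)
There is a genuine gap, and you have in fact diagnosed it yourself: your argument stops exactly where the proof has to happen. The reduction of Melo's condition \eqref{eq:Melo-sufficient-condition} to the per-feature inequality $\gamma\|\Pi_\pi\phi_i\|_{D^\beta}<\|\phi_i\|_D$ (via disjoint supports from \cref{assumption:nonnegative-elements} and \cref{assumption:orthogonal}) is correct, but the final ``sharper accounting'' you defer to does not exist in the form you hope for. The restricted next-state mass $q_i(s')=\sum_{(s,a)\in S_i}d_a(s)P_a(s,s')$ charges transitions that \emph{originate} in $S_i$, but the next states $s'$ they land on can lie in the $s$-supports of \emph{any} of the $\Pi_\pi\phi_j$; there is no structural reason why only the $j=i$ contribution survives in $\sum_{s'}q_i(s')(\Pi_\pi w)(s')^2$. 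So after Cauchy--Schwarz and Jensen you are genuinely stuck with the factor $\bigl(\sum_j\|\phi_j\|_D^2\bigr)^{1/2}$ rather than $\|\phi_i\|_D$, and the direct elementwise route does not close for $n>1$. A plan whose last step is an acknowledged open estimate is not a proof.

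The paper avoids this algebraic comparison entirely. It shows that Melo's condition yields a quadratic Lyapunov function $V(\theta)=\tfrac12\theta^T\theta$ whose derivative along the upper comparison system \eqref{eq:upper-comparison-system-LFA} is negative for \emph{every} switching signal (the H\"older/Cauchy--Schwarz step is applied to the bilinear form $(\theta-\theta^*)^T\Phi^TDP\Pi_\pi\Phi(\theta-\theta^*)$ as a whole, using the stationarity identity, not feature by feature), concluding that the switched linear system is globally asymptotically stable under arbitrary switching; it then invokes the necessity direction of \cref{lemma:fundamental-stability-lemma} to deduce that the row-dominating-diagonal condition of \cref{prop:sufficient-condition-Q-learning-LFA} must hold. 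Whatever one thinks of the details of that necessity step, it is the mechanism that bridges the gap you could not close: it converts a quadratic ($L^2$-type) stability certificate into the $L^1$-type row-sum condition without ever comparing the two inequalities directly. If you want to salvage a direct argument, you would need to reproduce something of that strength; the support-disjointness bookkeeping alone will not do it.
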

\begin{proof}
The basic idea of the proof relies on the fact that Melo's sufficient condition ensures the existence of a quadratic Lyapunov function for the upper comparison system~\eqref{eq:upper-comparison-system-LFA} following the results in~\cite{melo2008analysis}. Since the new sufficient condition,~\cref{prop:less-conservative}, is a necessary and sufficient condition for the global asymptotic stability of the upper comparison system, the Melo's condition implies the proposed new condition. Suppose that Melo's sufficient condition holds, and consider the quadratic Lyapunov function candidate
\begin{align*}
V(\theta_t-\theta^*):=\frac{1}{2}(\theta_t-\theta^*)^T (\theta_t-\theta^*).
\end{align*}

Its time derivative along the state trajectories of the upper comparison system~\eqref{eq:upper-comparison-system-LFA} is given by
\begin{align*}
&\frac{d}{dt}V(\theta_t-\theta^*)=(\theta_t-\theta^*)^T\Phi^T(\gamma DP\Pi_{\pi_{\Phi\theta_t}}-D)\Phi(\theta_t-\theta^*)\\
=&\gamma (\theta_t-\theta^*)^T \Phi^T DP\Pi_{\pi_{\Phi\theta_t}}\Phi(\theta_t-\theta^*)-(\theta_t-\theta^*)^T\Phi^T D\Phi(\theta_t-\theta^*)\\
=&-(\theta_t-\theta^*)^T \Phi^T D\Phi (\theta_t-\theta^*)+\gamma {\mathbb E}[(\theta_t-\theta^*)^T\Phi^T (e_a\otimes e_s)(e_{s'})^T\Pi_{\pi_{\Phi\theta_t}}\Phi(\theta_t-\theta^*)\},
\end{align*}
where $(s,a)$ is sampled from the stationary state-action distribution and $s'\sim P_a(s,\cdot)$. Similar to the ideas in~\cite{melo2008analysis}, using Holder's inequality leads to
\begin{align*}
\frac{d}{dt}V(\theta_t-\theta^*)=&(\theta_t-\theta^*)^T\Phi^T(\gamma DP\Pi_{\pi_{\Phi\theta_t}}-D)\Phi(\theta_t-\theta^*)\\
\le& -(\theta_t-\theta^*)^T\Phi^T D\Phi(\theta_t-\theta^*)\\
&+\gamma\sqrt{{\mathbb E}[(\theta_t-\theta^*)^T\Phi^T(e_a\otimes e_s)(e_a\otimes e_s)^T \Phi(\theta_t-\theta^*)]}\\
&\times \sqrt{ {\mathbb E}[(\theta_t-\theta^*)^T\Phi^T\Pi_{\pi_{\Phi\theta_t}}^T (e_{s'})(e_{s'})^T\Pi_{\pi_{\Phi\theta_t}}\Phi(\theta_t-\theta^*)]}\\
=&-(\theta_t-\theta^*)^T\Phi^T D\Phi(\theta_t-\theta^*)\\
&+\gamma\sqrt{(\theta_t-\theta^*)^T \Phi^T D\Phi(\theta_t-\theta^*)}\\
&\times\sqrt{(\theta_t-\theta^*)^T\Phi^T \Pi_{\pi_{\Phi\theta_t}}^T D^\beta\Pi_{\pi_{\Phi\theta_t}}\Phi(\theta_t-\theta^*)},
\end{align*}
where the last equality uses the fact that the distribution of $s'$ is identical to the distribution of $s$. Now, we apply the Melo's condition to have
\begin{align*}
\frac{d}{dt}V(\theta_t-\theta^*)=&(\theta_t-\theta^*)^T\Phi^T(\gamma DP\Pi_{\pi_{\Phi\theta_t}}-D)\Phi(\theta_t-\theta^*)\\
<&-(\theta_t-\theta^*)^T \Phi^T D\Phi(\theta_t-\theta^*)\\
&+\gamma\sqrt{(\theta_t-\theta^*)^T\Phi^T D\Phi(\theta_t-\theta^*)}\sqrt{\frac{1}{\gamma^2}(\theta_t-\theta^*)^T\Phi^T D\Phi(\theta_t-\theta^*)}\\
=&-(\theta_t-\theta^*)^T\Phi^T D\Phi(\theta_t-\theta^*)+(\theta_t-\theta^*)^T\Phi^T D\Phi(\theta_t-\theta^*)\\
=&0,\quad \forall \theta_t-\theta^*\neq 0.
\end{align*}

This implies that $V$ is a Lyapunov function. By the standard Lyapunov theorem, the origin of the upper comparison system~\eqref{eq:upper-comparison-system-LFA} is globally asymptotically stable. The proof holds even if the upper comparison system is arbitrarily switching. Since the new sufficient condition in~\cref{prop:less-conservative} is a necessary and sufficient condition for the global asymptotic stability of the upper comparison system~\eqref{eq:upper-comparison-system-LFA} under arbitrary switching, this implies that the proposed new condition holds. This completes the proof.
\end{proof}

Under~\cref{assumption:nonnegative-elements} and~\cref{assumption:orthogonal},~\cref{prop:less-conservative} essentially implies that the set of MDPs with linear function approximations whose convergence is verifiable by~\cref{prop:sufficient-condition-Q-learning-LFA} includes those based on the condition~\eqref{eq:Melo-sufficient-condition}.

A naturally question that arises here is whether the inclusion is strict. Below we answer the question by providing a simple counter example which satisfies the sufficient condition in~\cref{prop:sufficient-condition-Q-learning-LFA}, while violating the Melo's sufficient condition~\eqref{eq:Melo-sufficient-condition}.
\begin{example}
Consider an MDP with ${\cal S}=\{1,2\}$, ${\cal A}=\{1,2\}$, $\gamma = 0.9$,
\begin{align*}
&P_1=\begin{bmatrix}
   1/2 & 1/2\\
   1 & 0 \\
\end{bmatrix},\quad P_2=\begin{bmatrix}
   0 & 1 \\
   2/3 & 1/3\\
\end{bmatrix},
\end{align*}
and a behavior policy $\beta$ such that
\begin{align*}
&{\mathbb P}[a = 1|s = 1] = 0.2,\quad {\mathbb P}[a = 2|s = 1] = 0.8,\\
&{\mathbb P}[a = 1|s = 2] = 0.7,\quad {\mathbb P}[a = 2|s = 2] = 0.3.
\end{align*}
The corresponding matrices $D^{\beta}$ and $D$ are given by
\begin{align*}
&D^\beta=\begin{bmatrix}
   0.5 & 0  \\
   0 & 0.5 \\
\end{bmatrix},\quad D= \begin{bmatrix}
   0.1 & 0 & 0 & 0\\
   0 & 0.35 & 0 & 0\\
   0 & 0 & 0.4 & 0\\
   0 & 0 & 0 & 0.15\\
\end{bmatrix}.
\end{align*}
If the feature matrix is
\begin{align*}
\Phi^T =\begin{bmatrix}
   1 & 2 & 0 & 1  \\
\end{bmatrix},
\end{align*}
then the set $\Theta_\Phi$ is given by $\Theta_\Phi = \{\pi_1,\pi_2 \}$, where $\pi_1$ is a deterministic policy such that $\pi_1(1) = 1$, $\pi_1(2) = 1$ and $\pi_2$ is a deterministic policy such that $\pi_2(1) = 2$, $\pi_2(2) = 2$, which is obtained by considering three cases, $\theta>0,\theta=0,\theta<0$, Here, we assume that whenever $\{1,2 \}=\argmax_{a\in {\cal A}}(\Phi\theta)(s,a)$, we select $a=1$ in Q-learning. The quantities in~\cref{prop:sufficient-condition-Q-learning-LFA} are given by $-0.885$ and $-0.03$ for all $\pi\in\Theta_\Phi = \{\pi_1,\pi_2 \}$. Therefore,~\cref{algo:standard-Q-learning-with-linear-function-approximation} converges to the optimal $\theta^*$. However, the quantity $\gamma^2\Phi^T\Pi_{\pi}^T D^\beta\Pi_{\pi}\Phi - \Phi^T D\Phi$ is computed as $-1.2450$ and $0.3750$ for all $\pi\in\Theta_\Phi = \{\pi_1,\pi_2 \}$, respectively. This implies that the condition in~\eqref{eq:Melo-sufficient-condition} fails to verify the convergence.
\end{example}

\section{Conclusion}
In this paper, we studied asymptotic convergence of Q-learning and its variants based on switching system models. The approach offers a more unified and straightforward convergence analysis of various Q-learning algorithms based on connections between Q-learning and switching system theories. In particular, we have analyzed asymptotic convergence of the standard Q-learning, averaging Q-learning, and Q-learning with linear function approximation. In addition, a sufficient condition to ensure convergence of Q-learning with linear function approximation has been developed based on a switching system stability theory. Compared to the approaches in~\cite{jaakkola1994convergence,tsitsiklis1994asynchronous} based on the stochastic approximation and contraction property of the Bellman operator, a disadvantage of the proposed approach is that the analysis is under a restricted assumption, i.e., samples of the state-action pair from a fixed distribution instead of arbitrary visits. However, the assumption is common in the ODE-based analysis in the literature. Moreover, we expect that the switching system approach in this paper provides practical and convenient tools for analysis of Q-learning and its variants, and can initiate new RL algorithm developments. Finally, a potential future topic is convergence rate analysis of the Q-learning algorithms and multi-agent Q-learning based on the switching system models.

\bibliographystyle{plainnat}
\bibliography{reference}

\end{document}